\newtheorem{theorem}{Theorem}[section]
\newtheorem{lemma}[theorem]{Lemma}
 \newcommand{\et}{\zeta} 
\newcommand{\E}{\mathbb{E}} 
\newcommand{\ii}{{\rm i}}
\numberwithin{equation}{section}
\begin{document}

\title[stability for inverse random source problems]{Stability for inverse random source problems of the polyharmonic wave equation}

\author{Peijun Li}
\address{LSEC, ICMSEC, Academy of Mathematics and Systems Science, Chinese Academy of Sciences, Beijing 100190, China, and School of Mathematical Sciences, University of Chinese Academy of Sciences, Beijing 100049, China}
\email{lipeijun@lsec.cc.ac.cn}

\author{Zhenqian Li}
\address{LSEC, ICMSEC, Academy of Mathematics and Systems Science, Chinese Academy of Sciences, Beijing 100190, China}
\email{lizhenqian2020@amss.ac.cn}

\author{Ying Liang}
\address{Department of Mathematics, Duke University, Durham, NC 27708, USA}
\email{ying.liang@duke.edu}


\subjclass[2010]{35R30, 35R60}

\keywords{Inverse random source problem, the polyharmonic operator, white noise, stability}

\begin{abstract}
This paper investigates stability estimates for inverse source problems in the stochastic polyharmonic wave equation, where the source is represented by white noise. The study examines the well-posedness of the direct problem and derives stability estimates for identifying the strength of the random source. Assuming a priori information of the regularity and support of the source strength, the H\"{o}lder stability is established in the absence of a potential. In the more challenging case where a potential is present, the logarithmic stability estimate is obtained by constructing specialized solutions to the polyharmonic wave equation.
\end{abstract}

\maketitle

\section{Introduction}

Inverse source problems, which aim to reconstruct unknown sources from observed wave field data, are fundamental to many applications, including optical and photoacoustic imaging, speech and sound source localization, as well as environmental monitoring and pollution detection. However, the inherent challenges in solving such problems stem from non-uniqueness, particularly due to non-radiating sources, which can produce identical wave fields outside the region of interest despite having different internal structures \cite{Bleistein77}. This ambiguity complicates the reconstruction, especially when boundary measurements are only available at a single frequency. Moreover, the inverse source problem is highly sensitive to noise, as small errors in the data can significantly distort the computed solutions. In response to these difficulties, research has shifted toward utilizing multi-frequency data to overcome the issues of uniqueness and stability. Previous work, notably by \cite{BLT2010jde}, demonstrated that incorporating multi-frequency measurements leads to unique and stable solutions in the context of the acoustic wave equation. Subsequent studies have extended these results, improving stability estimates for inverse source problems in more complex wave systems, including elastic and electromagnetic waves \cite{BLZ-JMPA20}. We refer to \cite{BLLT15} for a review of recent developments in theoretical and computational approaches to inverse scattering problems using multi-frequency data.

To address unpredictable environmental conditions, incomplete system information, and uncertainties arising from measurement noise, random parameters are introduced into mathematical models. This inclusion significantly increases the complexity of the inverse problem, transforming it from a deterministic framework into a more challenging stochastic one. The objective of the inverse random source problem is to recover statistical characteristics, including the mean and variance of the random parameters, to quantify the uncertainties associated with the source \cite{hohage2020ip}. The inverse random source problem for acoustic waves was first studied in \cite{devaney1979jmp}. Further advancements include the computational framework introduced in \cite{BCL16}, where random sources are modeled as white noise, a widely used uncorrelated stochastic process. Recently, \cite{Li2023stability} examined the stability of inverse source problems related to the three-dimensional Helmholtz equation, with the source represented by white noise. The study established H\"{o}lder- and logarithmic-type stability for homogeneous and inhomogeneous media, respectively, based on the correlation of Cauchy data from random wave fields.

Polyharmonic wave scattering problems have recently attracted growing interest due to their significant applications in engineering and materials science. This mathematical model plays a critical role in analyzing vibrational modes and dynamic responses in mechanical systems and composite materials, aiding the design of structures for various applications, such as aerospace and automotive engineering. For background and theoretical results on polyharmonic equations, we refer to \cite{GGS10} and the references therein. Inverse boundary value problems for the polyharmonic operator are studied in \cite{ST19, TV16, Krupchyk14}.

The aim of this work is to establish stability estimates for inverse random source problems in the context of polyharmonic wave equations. The inverse random source problem involves determining the strength of the source function using the correlation of measurements of random wave fields. We show that H\"{o}lder- and logarithmic-type stability can be achieved for the inverse random source problem for polyharmonic wave equations, with and without a potential, respectively, by utilizing correlation-based data at a single frequency and constructing special solutions to the polyharmonic wave equations. These results are unattainable for deterministic counterparts due to the presence of non-radiating sources. The use of complex geometric optics solutions to tackle inverse wave scattering problems can be found in \cite{hahner2001new} for deterministic inverse medium problems and in \cite{Li2023stability} for inverse random source problems for acoustic wave equations. In this work, we extend this approach to higher-order elliptic equations, simplifying the previous framework by removing the dependence of the constructed solutions on the wavenumber.

The structure of the paper is as follows: Section 2 addresses the well-posedness of the direct problem. Section 3 is dedicated to deriving stability estimates for the inverse source problems of stochastic polyharmonic wave equations, with and without a potential. Concluding remarks are provided in Section 4.

\section{The direct problem}

In this section, we introduce the necessary preliminaries for the model equation and examine the well-posedness of the associated direct problem.

Consider the stochastic polyharmonic wave equation
\begin{align}\label{eq:poly}
(-\Delta)^nu-k^{2n}u+qu=f\quad\text{in} ~ \mathbb R^3,
\end{align}
where $n\ge1$ is an integer,  $k>0$ is the wavenumber, and $q\in L^\infty(B_1)$ represents the potential function, with $B_1$ denoting the unit ball in $\mathbb{R}^3$. In particular, the Helmholtz equation and the biharmonic wave equation are two specific instances of the polyharmonic wave equation, corresponding to $n=1$ and $n=2$, respectively. The random source $f$ is assumed to take the form
\[
 f(x) = \sqrt{\sigma}(x) \dot{W}_x,
\]
where $\dot W_x$ is the spatial white noise, with $W_x$ being the Brownian motion defined on the complete probability space $(\Omega, \mathcal F, \mathbb P)$. The function $\sigma\geq 0$ is referred to as the strength of the random 
source, and it satisfies $\sigma\in H_0^s(B_1)$ with $s>3$ and $\sqrt{\sigma}\in H^{\frac{3}{2}+\delta}(B_1)$, where $0<\delta<\frac{1}{2}$. In addition, the Sommerfeld radiation condition is imposed to ensure the well-posedness of the stochastic polyharmonic wave equation (cf. \cite{LLWY24}):
\begin{equation}\label{eq:radia}
\lim_{r=|x|\to\infty}r\left(\partial_r u-{\rm i}k u\right)=0. 
\end{equation}

Given the random source $f$, the direct problem involves analyzing the well-posedness of \eqref{eq:poly}--\eqref{eq:radia}. In contrast, the inverse problem seeks to recover the strength $\sigma$ by utilizing the statistical properties of the wave field $u$, measured on the boundary of the ball $B_R\subset\mathbb R^3$, where $B_R$ has radius $R>1$.

To investigate the well-posedness of the direct problem, we begin by examining the properties of the Green's function. It can be verified using operator splitting that
\begin{equation}\label{os}
\left((-\Delta)^n-k^{2n}\right)^{-1}=\frac1{nk^{2n}}\sum_{j=0}^{n-1}\kappa_j^2(-\Delta-\kappa_j^2)^{-1},
\end{equation}
where $\kappa_j=ke^{{\rm i}\frac{j\pi}n}, j=0,1,\cdots,n-1$. Let $G$ denote the Green's function corresponding to the polyharmonic wave operator $(-\Delta)^n-k^{2n}$, which satisfies
\[
((-\Delta)^n-k^{2n})G(x,y,k)=\delta(x-y). 
\] 
Utilizing the operator splitting \eqref{os}, we obtain 
\begin{equation*}\label{eq:Green1}
G(x,y,k)=\frac1{nk^{2n}}\sum_{j=0}^{n-1}\kappa_j^2\Phi(x,y,\kappa_j),
\end{equation*}
where $\Phi$ is the Green's function of the Helmholtz equation in $\mathbb R^3$, given explicitly by 
\begin{equation*}
\Phi(x,y,\kappa_j)=\frac1{4\pi}\frac{e^{{\rm i}\kappa_j|x-y|}}{|x-y|}.
\end{equation*}

Define two integral operators
\begin{align*}
\mathcal H_k(\phi)(x):&=\int_{\mathbb R^3}G(x,y,k)\phi(y)dy,\\
\mathcal K_k(\phi)(x):&=\int_{\mathbb R^3}G(x,y,k)q(y)\phi(y)dy. 
\end{align*}
The following lemma provides estimates for the operators $\mathcal H_k$ and $\mathcal K_k$ with respect to the wavenumber $k$. The proof can be found in \cite{li24}.

\begin{lemma}\label{lm:H}
Suppose $D$ and $B$ are two bounded Lipschitz domains of $\mathbb{R}^3$. 
\begin{enumerate}
\item[(i)] The operator $\mathcal H_k: H^{-s_1}(D)\to H^{s_2}(B)$ is bounded, and it satisfies the estimate
\begin{align*}
\|\mathcal H_k\|_{\mathcal L(H^{-s_1}(D),H^{s_2}(B))}\lesssim k^{s-2n+1},
\end{align*}
where $s:=s_1+s_2\in[0,2n-1)$ and $s_1, s_2\ge0$. Furthermore, this boundedness extends to $\mathcal H_k: H^{-s}(D)\to L^\infty(B)$, with
\begin{align*}
\|\mathcal H_k\|_{\mathcal L(H^{-s}(D),L^\infty(B))}\lesssim k^{s-2n+\frac52+\epsilon}, 
\end{align*}
where $s\in[0,2n-1)$ and $\epsilon>0$.

\item[(ii)] For any $p, q > 1$ satisfying $\frac{1}{p}+\frac{1}{q}=1$, the operator $\mathcal H_k$ is compact from $W^{-\gamma,p}(D)$ to $W^{\gamma,q}(B)$, where $0<\gamma<\min\big\{\frac{2n-1}{2}, \frac{2n-1}{2}+3(\frac{1}{q}-\frac{1}{2})\big\}$.
\end{enumerate}
\end{lemma}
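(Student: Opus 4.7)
My plan is to leverage the operator splitting \eqref{os}, which writes $G$ as a weighted sum of the Helmholtz Green's functions $\Phi(\cdot,\cdot,\kappa_j)$ with prefactor of modulus $k^{2-2n}$, so the entire problem reduces to estimates on the Helmholtz convolution operator $\mathcal{T}_\kappa \phi := \int_{\mathbb{R}^3}\Phi(\cdot,y,\kappa)\phi(y)\,dy$ at the complex wavenumbers $\kappa_j = ke^{{\rm i}j\pi/n}$. Since $|\kappa_j|=k$ and $\mathrm{Im}\,\kappa_j \ge 0$, each $\mathcal{T}_{\kappa_j}$ is an outgoing or exponentially damped resolvent amenable to standard bounds, and combining a Helmholtz estimate of the form $\|\mathcal{T}_{\kappa_j}\|_{H^{-s_1}(D)\to H^{s_2}(B)} \lesssim k^{s-1}$ with the $k^{2-2n}$ prefactor produces the stated scaling $k^{s-2n+1}$ after summing over $j$.

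For part (i), I would first establish the required Helmholtz building block by interpolating between the non-trapping resolvent estimate $\|\mathcal{T}_\kappa\|_{L^2(D)\to L^2(B)} \lesssim k^{-1}$ at the endpoint $s=0$ and an $H^2$-regularity bound obtained by writing $-\Delta u = f + \kappa^2 u$ and feeding the $L^2$ bound into the lower-order term, with duality handling the negative-index source spaces. To cover the full range $s<2n-1$, which for $n\ge 2$ exceeds what a single Helmholtz resolvent provides, I would pass instead to the factorization $(-\Delta)^n - k^{2n} = \prod_{j=0}^{n-1}(-\Delta-\kappa_j^2)$ and interpret $\mathcal{H}_k$ as the composition of the individual Helmholtz resolvents $\mathcal{T}_{\kappa_j}$, each contributing additional Sobolev smoothing with controlled wavenumber growth; an inductive bootstrap on $s_2$ through the factored resolvent then yields the claimed range. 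The $L^\infty$ statement follows immediately from Sobolev embedding $H^{3/2+\epsilon}(B)\hookrightarrow L^\infty(B)$ in $\mathbb{R}^3$.

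For part (ii), I would sharpen the mapping of part (i) to a strict gain $\mathcal{H}_k:W^{-\gamma,p}(D)\to W^{\gamma+\eta,q}(B)$ for some small $\eta>0$, combining the Hardy--Littlewood--Sobolev inequality for convolution against each (locally integrable) Helmholtz kernel with the regularity already obtained; the restriction $\gamma<\frac{2n-1}{2} + 3(\frac{1}{q}-\frac{1}{2})$ encodes the admissible exponents on the $L^p$ Sobolev ladder. Compactness of $\mathcal{H}_k:W^{-\gamma,p}(D)\to W^{\gamma,q}(B)$ then follows from the compact Rellich--Kondrachov embedding $W^{\gamma+\eta,q}(B)\hookrightarrow W^{\gamma,q}(B)$ on the bounded Lipschitz domain $B$.

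The main obstacle is tracking the wavenumber dependence faithfully through interpolation, duality, and composition while realizing the uniform range $s\in[0,2n-1)$; in particular, for $n\ge 2$ one cannot work with a single Helmholtz resolvent but must exploit the full factored structure of the polyharmonic operator to reach the stated smoothing threshold. A secondary (largely cosmetic) complication is handling the complex wavenumbers $\kappa_j$ with positive imaginary part, but since those branches produce exponentially decaying kernels, they are generally easier to control than the real-wavenumber branch $\kappa_0 = k$.
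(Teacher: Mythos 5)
The paper does not actually prove this lemma --- it is quoted with the remark that ``the proof can be found in \cite{li24}'' --- so there is no in-paper argument to compare against line by line. Judged on its own terms, your outline has the right general shape (operator splitting, reduction to Helmholtz resolvent estimates, interpolation and duality, Rellich--Kondrachov for compactness), but it contains two concrete gaps.

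First, the claim that the $L^\infty$ bound ``follows immediately from Sobolev embedding $H^{3/2+\epsilon}(B)\hookrightarrow L^\infty(B)$'' is wrong over most of the stated range. To use part (i) with $s_2=\tfrac32+\epsilon$ you need $s+\tfrac32+\epsilon<2n-1$, i.e.\ $s<2n-\tfrac52-\epsilon$, whereas the lemma asserts the $L^\infty$ estimate for all $s\in[0,2n-1)$; for $n=1$ the embedding route covers \emph{no} value of $s$ at all, yet the lemma (which specializes to the Helmholtz case) still claims $\|\mathcal H_k\|_{\mathcal L(H^{-s}(D),L^\infty(B))}\lesssim k^{s+\frac12+\epsilon}$ there. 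The $L^\infty$ statement needs its own argument, e.g.\ the duality bound $|\mathcal H_k\phi(x)|\le \|G(x,\cdot,k)\|_{H^{s}(D)}\|\phi\|_{H^{-s}(D)}$ together with a direct estimate of $\sup_{x\in B}\|G(x,\cdot,k)\|_{H^{s}(D)}$.

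Second, neither of your two routes to the full range $s\in[0,2n-1)$ is actually viable as described. Summing the single resolvents $\mathcal T_{\kappa_j}$ cannot work termwise: each $\mathcal T_{\kappa_j}$ smooths by at most two orders (and the local bound $\lesssim k^{s-1}$ for the real-wavenumber branch itself holds only for $s<1$), so for $n\ge 2$ the range $s<2n-1$ is only reached through the \emph{cancellation} in the partial-fraction sum --- note $\sum_j\kappa_j^2=0$, so the leading $|x-y|^{-1}$ singularities cancel and the combined symbol decays like $|\xi|^{-2n}$ rather than $|\xi|^{-2}$. Your fallback, composing the resolvents via $\prod_j(-\Delta-\kappa_j^2)^{-1}$, runs into the problem that the local estimates for the real-wavenumber factor $\kappa_0=k$ rely on compact support of the input, which is destroyed by the other factors; chaining them requires weighted (Agmon--H\"ormander type) spaces that your sketch does not supply. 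The mechanism that actually delivers both the range and the power $k^{s-2n+1}$ is a direct Fourier analysis of $(|\xi|^{2n}-k^{2n})^{-1}$: a symbol bound away from the characteristic sphere $|\xi|=k$ and a restriction/trace estimate for compactly supported data near it, where $(|\xi|^{2n}-k^{2n})^{-1}\approx (nk^{2n-1}(|\xi|-k))^{-1}$ produces the factor $k^{-(2n-1)}$. This key step is absent from the proposal.
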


Hereafter, the notation $a \lesssim b$ denotes that there exists a generic positive constant $C$ such that $a\leq Cb$. Using the definition of the operator $\mathcal K_k$ and the properties of $\mathcal H_k$, we can establish the following lemma.

\begin{lemma}\label{lm:K}
Let $B\subset\mathbb R^3$ be a bounded domain. The operator $\mathcal K_k$ is bounded from $L^2(B)$ to $H^s(B)$ and from $L^2(B)$ to $L^\infty(B)$, satisfying
\begin{align*}
\|\mathcal K_k\|_{\mathcal L(L^2(B),H^s(B))}\lesssim k^{s-2n+1}, \quad 
\|\mathcal K_k\|_{\mathcal L(L^2(B),L^\infty(B))}\lesssim k^{-2n+\frac52+\epsilon}
\end{align*}
where $s\in[0,2n-1)$ and $\epsilon>0$. In addition, if $B$ satisfies the strong local Lipschitz condition, then $\mathcal K_k:H^n(B)\to H^n(B)$ is a compact operator.
\end{lemma}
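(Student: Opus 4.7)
The plan hinges on the factorization $\mathcal{K}_k = \mathcal{H}_k \circ M_q$, where $M_q$ is pointwise multiplication by $q$. Since $q \in L^\infty(B_1)$ is supported in $B_1$, the operator $M_q$ sends $L^2(B)$ continuously into $L^2(B_1)$ with norm $\|q\|_{L^\infty(B_1)}$, so after composing with $\mathcal{H}_k$ the two quantitative estimates reduce immediately to Lemma \ref{lm:H}(i). Specifically, choosing $s_1=0$, $s_2=s$ in the first bound of Lemma \ref{lm:H}(i) yields $\mathcal{H}_k: L^2(B_1) \to H^s(B)$ with norm $\lesssim k^{s-2n+1}$, and composing with $M_q$ produces the claimed $L^2(B)\to H^s(B)$ estimate. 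Taking $s=0$ in the $L^\infty$ bound of Lemma \ref{lm:H}(i) gives $\mathcal{H}_k: L^2(B_1) \to L^\infty(B)$ with norm $\lesssim k^{-2n+\frac{5}{2}+\epsilon}$, producing the second estimate by the same composition.

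For compactness of $\mathcal{K}_k : H^n(B) \to H^n(B)$, my plan is to gain $n$ extra derivatives by elliptic regularity and then apply Rellich--Kondrachov. For any $\phi \in H^n(B)$ one has $q\phi \in L^2(\mathbb{R}^3)$ with compact support in $B_1$, and $\psi := \mathcal{K}_k \phi = \mathcal{H}_k(q\phi)$ is the outgoing solution of
\[
((-\Delta)^n - k^{2n})\psi = q\phi \quad \text{in } \mathbb{R}^3.
\]
Interior regularity for the polyharmonic operator $(-\Delta)^n - k^{2n}$ then lifts $\psi$ to $H^{2n}_{\mathrm{loc}}(\mathbb{R}^3)$, with a local bound of the form
\[
\|\psi\|_{H^{2n}(B)} \lesssim \|q\phi\|_{L^2} + \|\psi\|_{L^2(B')} \lesssim \|\phi\|_{L^2(B)} \le \|\phi\|_{H^n(B)},
\]
where $B' \supset\supset B$ is an auxiliary bounded domain and the control of $\|\psi\|_{L^2(B')}$ invokes the first part of the lemma. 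The strong local Lipschitz assumption on $B$ then activates the Rellich--Kondrachov theorem, making the embedding $H^{2n}(B) \hookrightarrow H^n(B)$ compact. Composing the bounded map $\mathcal{K}_k : H^n(B) \to H^{2n}(B)$ with this compact embedding yields the claimed compactness.

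The step I expect to require the most care is the $H^{2n}$-regularity estimate globally on $B$: one needs interior regularity in a neighborhood of $\mathrm{supp}(q\phi) \subset B_1$, combined with the observation that $\psi$ solves the homogeneous equation (and is therefore smooth) outside $B_1$, in order to assemble a clean $H^{2n}(B)$-bound. Everything else is a direct invocation of Lemma \ref{lm:H}, the product structure $\mathcal{K}_k = \mathcal{H}_k M_q$, and the standard Rellich compactness.
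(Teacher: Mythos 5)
Your two quantitative estimates are obtained exactly as in the paper: factor $\mathcal K_k=\mathcal H_k M_q$, bound $\|q\phi\|_{L^2}\le\|q\|_{L^\infty}\|\phi\|_{L^2}$, and invoke Lemma \ref{lm:H}(i) with $s_1=0$. Where you diverge is the compactness claim. The paper's argument is shorter and reuses part (i) directly: it writes $\mathcal K_k$ on $H^n(B)$ as the composition of the compact embedding $H^n(B)\hookrightarrow L^2(B)$ (Rellich--Kondrachov, which is where the strong local Lipschitz condition enters) with the bounded map $\mathcal K_k:L^2(B)\to H^n(B)$ already established in the first part with $s=n$. You instead place the compact embedding at the other end: you upgrade $\mathcal K_k:H^n(B)\to H^{2n}(B)$ via interior elliptic regularity for $(-\Delta)^n-k^{2n}$ (using that $\psi=\mathcal H_k(q\phi)$ solves the equation on all of $\mathbb R^3$, so the estimate on $\overline B$ is genuinely interior relative to a larger ball $B'$, with $\|\psi\|_{L^2(B')}$ controlled by part one), and then compose with the compact embedding $H^{2n}(B)\hookrightarrow H^n(B)$. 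Both routes are valid and both ultimately rest on Rellich--Kondrachov over a sufficiently regular $B$; yours costs an extra regularity step but has the minor advantage of not requiring $s=n$ to lie in the admissible range $[0,2n-1)$ of Lemma \ref{lm:H}(i) (which is where the paper's one-line argument would be strained at $n=1$), since elliptic regularity delivers the full $H^{2n}$ gain for every $n\ge1$.
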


\begin{proof}
According to the definition of $\mathcal K_k$ and Lemma \ref{lm:H}, we have
\begin{align*}
\|\mathcal K_k\phi\|_{H^s(B)}&=\|\mathcal H_k(q\phi)\|_{H^s(B)}\\
&\lesssim k^{s-2n+1}\|q\phi\|_{L^2(B)}\\
&\lesssim k^{s-2n+1}\|q\|_{L^\infty(B)}\|\phi\|_{L^2(B)}
\end{align*}
for any $\phi\in L^2(B)$, and
\begin{align*}
\|\mathcal K_k\psi\|_{L^\infty(B)}&=\|\mathcal H_k\|_{\mathcal L(L^2(B),L^\infty(B))}\|q\psi\|_{L^2(B)}\\
&\lesssim k^{-2n+\frac52+\epsilon}\|q\psi\|_{L^2(B)}\\
&\lesssim k^{-2n+\frac52+\epsilon}\|q\|_{L^\infty(B)}\|\psi\|_{L^2(B)}
\end{align*}
for any $\psi\in L^2(B)$. 
 
The compactness of the operator $\mathcal K_k:H^n(B)\to H^n(B)$ then follows from the compactness of the embedding $H^n(B)\hookrightarrow L^2(B)$, by the Rellich--Kondrachov theorem (cf. \cite[Theorem 6.3]{AF03}).
\end{proof}

Using these integral operators, the direct problem defined by \eqref{eq:poly}--\eqref{eq:radia} can be formulated as the Lippmann--Schwinger equation: 
\begin{equation}\label{eq:Lip}
u+\mathcal{K}_k u= \mathcal{H}_k f.
\end{equation}

\begin{theorem}\label{thm:exist_inhomo}
For sufficiently large wavenumber $k$, the Lippmann--Schwinger equation \eqref{eq:Lip} admits a unique solution $u\in H^n_{loc}(\mathbb R^3)$. 
\end{theorem}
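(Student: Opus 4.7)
The strategy is to invert $I+\mathcal{K}_k$ by a Neumann series that exploits the decay of $\|\mathcal{K}_k\|$ in $L^2$ as $k\to\infty$, and then to bootstrap the regularity of the resulting solution.

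\emph{Step 1 (reduction to a bounded domain and Neumann series).} Because $q$ is supported in $B_1$, the map $\phi\mapsto\mathcal{K}_k\phi$ depends only on $\phi|_{B_1}$, so \eqref{eq:Lip} is equivalent to the fixed-point problem $(I+\mathcal{K}_k)v=(\mathcal{H}_k f)|_{B_1}$ posed in $L^2(B_1)$, with the full solution then recovered from
\[
u=\mathcal{H}_k f-\mathcal{K}_k\tilde v \quad\text{in } \mathbb R^3,
\]
where $\tilde v$ is the zero-extension of $v$. Lemma \ref{lm:K} with $s=0$ yields $\|\mathcal{K}_k\|_{\mathcal{L}(L^2(B_1))}\lesssim k^{-2n+1}$, so for $k$ large enough (depending on the implicit constant and on $\|q\|_{L^\infty(B_1)}$) this norm drops below $1/2$. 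The Neumann series $(I+\mathcal{K}_k)^{-1}=\sum_{j\ge 0}(-\mathcal{K}_k)^j$ then converges in operator norm, giving existence and uniqueness of $v$, and hence of $u$.

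\emph{Step 2 (right-hand side).} One must check that $\mathcal{H}_k f$ is a well-defined element of $L^2_{\mathrm{loc}}(\mathbb R^3)$ almost surely. Interpreting $\mathcal{H}_k f(x)=\int G(x,y,k)\sqrt{\sigma(y)}\,dW_y$ as an It\^o integral, the splitting formula for $G$ and the $|x-y|^{-1}$ singularity of $\Phi$ give $\E\|\mathcal{H}_k f\|_{L^2(B_R)}^2<\infty$; alternatively one may apply Lemma \ref{lm:H}(i) with $s_1=\tfrac{3}{2}+\epsilon$, $s_2=0$, using that $\sqrt{\sigma}\,\dot W_x\in H^{-3/2-\epsilon}(\mathbb R^3)$ with support in $\overline{B_1}$.

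\emph{Step 3 (regularity).} To upgrade to $u\in H^n_{\mathrm{loc}}(\mathbb R^3)$, use that $v\in L^2(B_1)$ and apply Lemma \ref{lm:K} with $s=n$ (permissible since $n<2n-1$ for $n\ge 2$; the borderline case $n=1$ is handled by taking $s$ slightly below $1$ and invoking interior elliptic regularity for $((-\Delta)^n-k^{2n})u=f-qu$) to obtain $\mathcal{K}_k u\in H^n_{\mathrm{loc}}$. Coupled with the $H^n_{\mathrm{loc}}$-regularity of $\mathcal{H}_k f$ (from Lemma \ref{lm:H}(i) with $s_2=n$, or equivalently by differentiating under the It\^o integral and checking integrability of $|D_x^\alpha G|^2$), the identity $u=\mathcal{H}_k f-\mathcal{K}_k u$ concludes the argument. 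The main obstacle I anticipate is Step 2/3: the stochastic, distribution-valued right-hand side forces one to argue carefully about a pathwise meaning of $\mathcal{H}_k f$ and its precise Sobolev regularity, whereas the operator-theoretic inversion is entirely routine once Lemma \ref{lm:K} is in hand.
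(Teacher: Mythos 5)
Your proposal is correct in substance and shares the core mechanism with the paper's proof -- namely, that $\|\mathcal K_k\|$ becomes small as $k\to\infty$ -- but it packages the argument differently. The paper works in $H^n(B)$ from the outset: it shows $\mathcal H_k f\in H^n(B)$ by combining $f=\sqrt{\sigma}\,\dot W_x\in H^{-2+\delta-\epsilon}(B)$ (via the product estimate of Tambaca) with the mapping property $\mathcal H_k:H^{-n+1+\epsilon}(B)\to H^n(B)$, invokes compactness of $\mathcal K_k$ on $H^n(B)$ and the Fredholm alternative, and disposes of the homogeneous equation by the bound $\|u^*\|_{H^n}\lesssim k^{-n+1}\|u^*\|_{L^2}$. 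Your Neumann-series inversion in $L^2(B_1)$ followed by a regularity bootstrap is more elementary (no compactness needed, and the $L^2$ bound $k^{-2n+1}$ is small for every $n\ge1$, whereas the paper's injectivity estimate degenerates at $n=1$), and your It\^{o}-isometry route to $\mathcal H_k f\in L^2_{\rm loc}$ is a legitimate alternative to the deterministic Sobolev mapping.

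The one place you overreach is the regularity of $\mathcal H_k f$ in Step 3, which is exactly the bottleneck of the whole theorem. With $f$ only in $H^{-3/2-\epsilon}$ (or $H^{-2+\delta-\epsilon}$), Lemma \ref{lm:H}(i) with $s_2=n$ forces $s_1+n<2n-1$, i.e.\ $n\ge 3$; this is why the paper's own proof explicitly restricts to $n\ge3$. Your It\^{o} route for $n=2$ can be salvaged, but only after checking that the $|x-y|^{-1}$ singularities of the Helmholtz kernels in the splitting of $G$ cancel (they do, since $\sum_j\kappa_j^{m+2}=0$ for $0\le m<2n-2$, leaving $G\sim|x-y|^{2n-3}$), which you gesture at but do not verify. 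For $n=1$ your parenthetical fix fails outright: interior regularity for $(-\Delta-k^2)u=f-qu$ with $f\in H^{-3/2-\epsilon}$ only yields $u\in H^{1/2-\epsilon}_{\rm loc}$, not $H^1_{\rm loc}$, so the claimed conclusion is not reachable for $n=1$ by any bootstrap. This is a defect shared with (indeed inherited from) the statement of the theorem itself, so it does not count heavily against you, but you should either restrict to $n\ge3$ as the paper implicitly does or supply the kernel-cancellation computation for $n=2$.
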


\begin{proof}
Let $B\subset \mathbb{R}^3$ be any bounded domain with a locally Lipschitz boundary. By Lemma \ref{lm:H}, setting $s_1=n-1-\epsilon$ and $s_2=n$ for $\epsilon>0$, we have that the operator $\mathcal{H}_k:H^{-n+1+\epsilon}(B)\to H^{n}(B)$ is bounded. According to \cite{Veraar11},  the white noise  $\dot W_x \in H^{-\frac{3}{2}-\epsilon}(B)$ for $\epsilon>0$. By \cite[Corollary 2.1]{Tambaca01}, since $\sqrt{\sigma}\in H^{\frac{3}{2}+\delta}(B)$, it follows that the source $f= \sqrt{\sigma}(x) \dot{W}_x$ belongs to $H^{-2+\delta-\epsilon}(B)$. Therefore, for $n\geq 3$, taking $\epsilon<\frac{\delta}{2}$ leads to $f\in H^{-n+1+\epsilon}(B)$, and thus $\mathcal{H}_k f \in H^n(B)$. By Lemma \ref{lm:K}, the operator $\mathcal K_k: H^n(B)\to H^n(B)$ is compact. 
 
It remains to prove that $u+\mathcal{K}_k u=0$ has only the trivial solution in $H^n(B)$.  Recalling Lemma \ref{lm:K}, it is shown that $\|\mathcal K_k\|_{\mathcal L(L^2(B),H^s(B))}\lesssim k^{s-2n+1}$. If $u^*$ is a solution to $u^*+\mathcal{K}_k u^*=0$, we have the estimate $\Vert u^*\Vert_{H^n(B)}\leq \|\mathcal K_k\|_{\mathcal L(L^2(B), H^n(B))} \Vert u^*\Vert_{L^2(B)}\lesssim k^{-n+1}\Vert u^*\Vert_{L^2(B)}.$ Thus, $u^*=0$ for sufficiently large wavenumber $k$, which completes the proof.
\end{proof}

The following result demonstrates the well-posedness of the direct problem \eqref{eq:poly}--\eqref{eq:radia} in the distributional sense, whose proof is similar to that of in \cite[Theorem 3.2]{li24} and is omitted here.

\begin{theorem}\label{thm_weak}
For sufficiently large wavenumber $k$, the scattering problem admits a unique solution $u\in H^n_{loc}(\mathbb R^3)$ in the distribution sense.
\end{theorem}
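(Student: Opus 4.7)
The plan is to upgrade the $H^n_{loc}$ solution of the Lippmann--Schwinger equation \eqref{eq:Lip} provided by Theorem \ref{thm:exist_inhomo} into a distributional solution of \eqref{eq:poly}--\eqref{eq:radia}, and then to show uniqueness by reversing this correspondence. Throughout, I would keep track of the low regularity of the source: by the argument preceding Theorem \ref{thm:exist_inhomo}, $f=\sqrt{\sigma}\dot W_x\in H^{-n+1+\epsilon}_{\mathrm{loc}}(\mathbb{R}^3)$, while $u\in H^n_{loc}$ and $qu\in L^2_{loc}$, so every term in \eqref{eq:poly} belongs to $H^{-n}_{loc}(\mathbb{R}^3)$ and the equation is well posed as an identity in $\mathcal{D}'(\mathbb{R}^3)$.

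First, I would verify existence. Let $u\in H^n_{loc}(\mathbb{R}^3)$ be the unique solution of \eqref{eq:Lip}, so that $u=\mathcal{H}_k(f-qu)$. For any test function $\phi\in C_c^\infty(\mathbb{R}^3)$, apply the polyharmonic Helmholtz operator to $\phi$ and use Fubini together with the defining property $((-\Delta)^n-k^{2n})G(\cdot,y,k)=\delta_y$ in distribution, integrating by parts on the smooth test function side. This yields
\[
\langle u,((-\Delta)^n-k^{2n})\phi\rangle=\langle \mathcal{H}_k(f-qu),((-\Delta)^n-k^{2n})\phi\rangle=\langle f-qu,\phi\rangle,
\]
which is exactly \eqref{eq:poly} in the distribution sense. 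The duality pairings make sense because $\mathcal{H}_k:H^{-n+1+\epsilon}\to H^n$ is bounded by Lemma \ref{lm:H}, and the extension to the rough source $f$ is justified by testing $f$ against the smooth image $\mathcal{H}_k^*[((-\Delta)^n-k^{2n})\phi]$.

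Next I would verify the Sommerfeld radiation condition \eqref{eq:radia} by exploiting the splitting
\[
G(x,y,k)=\frac{1}{nk^{2n}}\sum_{j=0}^{n-1}\kappa_j^2\,\Phi(x,y,\kappa_j),\qquad \kappa_j=ke^{{\rm i}\pi j/n}.
\]
For $j=1,\ldots,n-1$ one has $\Im\kappa_j=k\sin(\pi j/n)>0$, so $\Phi(\cdot,\cdot,\kappa_j)$ and all of its derivatives decay exponentially in $|x-y|$; these terms trivially satisfy \eqref{eq:radia}. The remaining term with $\kappa_0=k$ is the standard outgoing Helmholtz Green's function, which satisfies the Sommerfeld condition with wavenumber $k$ by the classical Helmholtz theory. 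Summing over $j$ and using the compact support of $f-qu$ with respect to the $y$-variable (inherited from $\sigma$ and $q$ being supported in $B_1$) shows that $u=\mathcal{H}_k(f-qu)$ itself satisfies \eqref{eq:radia}.

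Finally, for uniqueness, let $v\in H^n_{loc}(\mathbb{R}^3)$ be any distributional solution of \eqref{eq:poly}--\eqref{eq:radia}. Applying Green's representation formula, valid because the radiation condition forces the boundary contributions on $\partial B_R$ to vanish as $R\to\infty$, I would derive that $v$ satisfies the Lippmann--Schwinger equation \eqref{eq:Lip}. Theorem \ref{thm:exist_inhomo} then forces $v=u$ for sufficiently large $k$. The main obstacle is handling the low regularity of $f$ carefully: in particular justifying the integration-by-parts identities and the Green's representation for the homogeneous part rigorously at the $H^{-n}$--$H^n$ duality level, rather than pointwise. This is precisely what is carried out in \cite[Theorem 3.2]{li24} for the analogous model, and the argument transfers verbatim to the present polyharmonic setting.
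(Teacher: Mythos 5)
Your plan --- upgrading the Lippmann--Schwinger solution of Theorem \ref{thm:exist_inhomo} to a distributional solution via the Green's-function identity, checking \eqref{eq:radia} through the splitting of $G$ into Helmholtz kernels with $\Im\kappa_j>0$ for $j=1,\dots,n-1$, and reversing the correspondence for uniqueness --- is exactly the standard argument that the paper invokes by deferring to \cite[Theorem 3.2]{li24}, so it matches the intended proof. The one step deserving more care is the vanishing of the boundary terms at infinity in the uniqueness direction, since the polyharmonic Green identity involves all the traces $\Delta^j v$ and $\partial_\nu\Delta^j v$ and not merely $\partial_r v-{\rm i}kv$; you correctly flag this as the main technical obstacle and defer it to the cited reference, just as the paper does.
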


The unique solution $u$ is a weak solution to $(-\Delta)^nu-k^{2n}u=0$ in the domain $B_{R+1}\setminus \overline{B_{\frac{1+R}{2}}}$ for $R>1$. By applying the interior regularity of weak solutions to higher order elliptic equations \cite[Theorem 11.1]{Taylor96}, we conclude that  $u\in H^{2n}_{loc}(B_{R+1}\setminus \overline{B_{\frac{1+R}{2}}}) $. Therefore, the quantities $\Delta^j u$ and $\partial_\nu \Delta^j u$ for $j=0, 1,2,...,n-1$ are well-defined when evaluated on the boundary $\partial B_R$. Accordingly, we can introduce the following correlation functions for the data acquired from the boundary $\partial B_R$:
\begin{align*}
F^1_{i,j} (x,y)& =\E[\Delta^i u(x) \Delta^j u(y)],\\
F^2_{i,j} (x,y)& =\E[\partial_\nu \Delta^i u(x) \Delta^j u(y)],\\
 F^3_{i,j} (x,y)& =\E[\Delta^i u(x) \partial_\nu \Delta^j u(y)],\\
 F^4_{i,j} (x,y)& =\E[  \partial_\nu\Delta^i u(x) \partial_\nu \Delta^j u(y)],
\end{align*}
Let
\begin{equation*}
 M = \max_{\alpha =1,2,3,4, (i,j)\in\Sigma}\left\{ \Vert F^\alpha_{i,j}\Vert_{L^2(\partial B_R\times \partial B_R)}\right\},
 \end{equation*}
 where
 \[
 \Sigma = \{(i,j)\in \mathbb{N}^2:0\leq i\leq j < n\}. 
 \]
The constant $M$ plays a crucial role in the stability estimates.

\section{The inverse problems}

In this section, we investigate the stability of the inverse source problem by estimating the Fourier coefficients of the source strength $\sigma$. Specifically, we analyze the low and high frequency coefficients separately. The cases with and without a potential are treated using different special solutions for the estimation of the low frequency components. This approach is inspired by previous stability estimates for the inverse random source problem in the Helmholtz equation \cite{Li2023stability}, as well as for the inverse medium scattering problem in a deterministic setting \cite{hahner2001new, isaev2013new}. By Theorem \ref{thm_weak}, the wavenumber $k$ is assumed to be sufficiently large to ensure that the direct problem has a unique solution.

The Fourier transform of the source strength $\sigma$ in the high frequency regime exhibits a decay behavior, as characterized by the Paley--Wiener--Schwartz theorem \cite{hormander03}.

\begin{lemma}\label{PWS}
For $\sigma \in H^{s}_0(B_1)$, let $\hat{\sigma}(\gamma)$ denote the Fourier transform of $\sigma$. There exists a positive constant $c_0$, depending only on $s$, such that
\begin{equation*}
|\hat{\sigma}(\gamma)|\leq c_0 (1+|\gamma|)^{-s}.
\end{equation*}
\end{lemma}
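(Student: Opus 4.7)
The plan is to view the statement as a form of the Paley--Wiener--Schwartz principle: compact support together with $H^s$ regularity forces polynomial decay of order $s$ on the Fourier transform. I favor a duality argument over repeated integration by parts, because it handles a fractional exponent $s$ in a single step.

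First, since $\sigma\in H^s_0(B_1)$, the extension by zero of $\sigma$ to $\mathbb R^3$ lies in $H^s(\mathbb R^3)$ and is supported in $\overline{B_1}$. Fix once and for all a cutoff $\phi\in C_c^\infty(\mathbb R^3)$ with $\phi\equiv 1$ on a neighborhood of $\overline{B_1}$, so that
\[
\hat\sigma(\gamma)=\int_{\mathbb R^3}\sigma(x)\,\phi(x)\,e^{-\ii\gamma\cdot x}\,dx,
\]
and the $H^s(\mathbb R^3)$--$H^{-s}(\mathbb R^3)$ duality pairing yields
\[
|\hat\sigma(\gamma)|\le \|\sigma\|_{H^s(\mathbb R^3)}\,\bigl\|\phi\,e^{-\ii\gamma\cdot}\bigr\|_{H^{-s}(\mathbb R^3)}.
\]

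Next, I compute the $H^{-s}$-norm of the modulated cutoff. Using the identity $\widehat{\phi\,e^{-\ii\gamma\cdot}}(\xi)=\hat\phi(\xi+\gamma)$ and Plancherel's theorem,
\[
\bigl\|\phi\,e^{-\ii\gamma\cdot}\bigr\|_{H^{-s}}^{2}=\int_{\mathbb R^3}(1+|\eta-\gamma|^{2})^{-s}\,|\hat\phi(\eta)|^{2}\,d\eta.
\]
I split this integral at $|\eta|=|\gamma|/2$. On the inner region $|\eta|\le |\gamma|/2$ one has $|\eta-\gamma|\ge |\gamma|/2$, so the weight is bounded by $C_s(1+|\gamma|)^{-2s}$ and the contribution is $\lesssim (1+|\gamma|)^{-2s}\|\hat\phi\|_{L^2}^{2}$. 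On the outer region $|\eta|>|\gamma|/2$ the rapid Schwartz decay of $\hat\phi$ can be exploited with a polynomial exponent $N$ chosen large in terms of $s$, giving a contribution which is negligible compared to $(1+|\gamma|)^{-2s}$. Together these produce
\[
\bigl\|\phi\,e^{-\ii\gamma\cdot}\bigr\|_{H^{-s}}\lesssim (1+|\gamma|)^{-s},
\]
with the implicit constant depending only on $s$ and the fixed cutoff $\phi$.

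For small $|\gamma|$ the claim is trivial, since $|\hat\sigma(\gamma)|\le\|\sigma\|_{L^1(B_1)}\le C\|\sigma\|_{H^s_0(B_1)}$ and $(1+|\gamma|)^{-s}$ is bounded below. Combining this with the large-$|\gamma|$ estimate gives the desired bound with $c_0$ depending on $s$ (and implicitly on $\|\sigma\|_{H^s_0(B_1)}$). The one technical point worth highlighting is the tail estimate of $\|\phi e^{-\ii\gamma\cdot}\|_{H^{-s}}$: it is essential to modulate against a smooth compactly supported $\phi$ rather than $\chi_{B_1}$, so that $\hat\phi$ is Schwartz and the outer-region contribution can be absorbed; this is the step where all the work is concentrated, although the calculation itself is routine.
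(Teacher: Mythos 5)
The paper offers no proof of this lemma---it simply cites the Paley--Wiener--Schwartz theorem from H\"{o}rmander---so there is no argument of the authors' to compare against; judged on its own, your duality proof is correct and self-contained. Writing $\hat\sigma(\gamma)=\langle\sigma,\phi e^{-\ii\gamma\cdot}\rangle$ and estimating $\|\phi e^{-\ii\gamma\cdot}\|_{H^{-s}}$ is equivalent to the standard convolution argument $\hat\sigma=(2\pi)^{-3}\hat\sigma*\hat\phi$ followed by Cauchy--Schwarz against the weight $(1+|\cdot|^2)^{s/2}$, and your splitting at $|\eta|=|\gamma|/2$ together with the Schwartz decay of $\hat\phi$ correctly delivers the $(1+|\gamma|)^{-s}$ bound; the extension-by-zero step is legitimate for $H^s_0(B_1)$, and the small-$|\gamma|$ case is in fact already covered by the same estimate. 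The only point worth flagging is the one you already note: the constant necessarily carries a factor of $\|\sigma\|_{H^s(B_1)}$, so the lemma's claim that $c_0$ depends ``only on $s$'' should be read as presupposing the a priori bound on the source strength that the paper assumes throughout.
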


For $\zeta > 0$, it follows from Lemma \ref{PWS} that there exists a positive constant $c_1$, depending on $s$, such that
\begin{equation}\label{eqn:fourierhigh}
\int_{|\gamma|>\et} |\hat{\sigma}(\gamma)|d\gamma\leq c_1 \et^{3-s}.
\end{equation}

\subsection{Without a potential}

We begin by analyzing the stability of the inverse random source problem in the absence of a potential.

\begin{lemma}\label{lem:Ewhitehomo}
For two solutions $U_i\in H^{2n}\left(\overline{B_{\hat R}}\right)$ for $i=1,2$ of the equation $(-\Delta)^nu-k^{2n}u=0$ with $\hat R>R>1$, there exists a positive constant $c_2$, depending on $R$ and $\hat R$, such that 
\begin{equation*}\label{Ewhitehomo-s1}
|\E[\langle f, U_1\rangle \langle f, U_2\rangle]| \leq c_2 M \Vert U_1\Vert_{H^{2n}(B_{\hat R})} \Vert U_2\Vert_{H^{2n}(B_{\hat R})}. 
\end{equation*}
\end{lemma}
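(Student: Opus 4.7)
The plan is to convert the stochastic pairing $\langle f,U_i\rangle$ into a boundary integral of the Cauchy data of $u$ on $\partial B_R$ via a polyharmonic Green's identity, and then pair these boundary integrals against each other so that the expectation produces precisely the correlation functions $F^\alpha_{i,j}$ whose $L^2$ norms are controlled by $M$. Since the $k^{2n}$ mass terms cancel between $((-\Delta)^n-k^{2n})u=f$ and $((-\Delta)^n-k^{2n})U_i=0$, iterating Green's second identity $n$ times yields
\begin{equation*}
\langle f,U_i\rangle=\int_{B_R}f\,U_i\,dx=\sum_{j=0}^{n-1}(-1)^j\int_{\partial B_R}\Bigl[\Delta^j u\,\partial_\nu\Delta^{n-1-j}U_i-\partial_\nu\Delta^j u\,\Delta^{n-1-j}U_i\Bigr]dS.
\end{equation*}
The traces on $\partial B_R$ are well defined because $f$ is supported in $B_1\subset B_R$, so interior elliptic regularity gives $u\in C^\infty$ on the collar $B_R\setminus\overline{B_1}$. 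The identity itself is justified either by mollifying $u$ on an intermediate ball $B_\rho$ with $1<\rho<R$ and passing to the limit, or by testing $f$ against a cutoff of $U_i$ compactly supported in $B_{\hat R}$ combined with the representation $u=\mathcal H_k f$.

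Multiplying the two identities for $i=1,2$, taking expectation, and applying Fubini's theorem decomposes $\E[\langle f,U_1\rangle\langle f,U_2\rangle]$ into a finite linear combination of double boundary integrals of the form
\begin{equation*}
\int_{\partial B_R}\int_{\partial B_R} F^\alpha_{a,b}(x,y)\,W^{(1)}_{n-1-a}(x)\,W^{(2)}_{n-1-b}(y)\,dS_x\,dS_y,
\end{equation*}
where $\alpha\in\{1,2,3,4\}$ is determined by which of $\Delta^{\bullet}$ or $\partial_\nu\Delta^{\bullet}$ falls on $u(x)$ and $u(y)$, and $W^{(i)}_\ell$ denotes either $\Delta^\ell U_i$ or $\partial_\nu\Delta^\ell U_i$. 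Using the swap symmetries $F^1_{a,b}(x,y)=F^1_{b,a}(y,x)$, $F^2_{a,b}(x,y)=F^3_{b,a}(y,x)$, and $F^4_{a,b}(x,y)=F^4_{b,a}(y,x)$, every index pair can be reduced to $(a,b)\in\Sigma$ without altering the $L^2(\partial B_R\times\partial B_R)$ norm, so each coefficient is bounded by $M$.

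Applying Cauchy--Schwarz in $L^2(\partial B_R\times\partial B_R)$ controls each term by $M\,\|W^{(1)}_{n-1-a}\|_{L^2(\partial B_R)}\,\|W^{(2)}_{n-1-b}\|_{L^2(\partial B_R)}$. The trace theorem, combined with interior elliptic regularity on the collar $B_{\hat R}\setminus\overline{B_R}$, yields
\begin{equation*}
\|\Delta^j U_i\|_{L^2(\partial B_R)}+\|\partial_\nu\Delta^j U_i\|_{L^2(\partial B_R)}\lesssim\|U_i\|_{H^{2n}(B_{\hat R})},\qquad 0\le j\le n-1,
\end{equation*}
with a constant depending only on $R$ and $\hat R$. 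Summing the finitely many terms from the Green's identity expansion then produces the asserted bound with $c_2=c_2(R,\hat R)$.

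The main technical obstacle is justifying the polyharmonic Green's identity when $u$ has only $H^n(B_R)$ global regularity. This is handled as indicated above, using the representation $u=\mathcal H_k f$ (so that boundary traces on $\partial B_R$ inherit the smoothness of the Green's function away from $B_1$) or an equivalent mollification argument. Once this is in place, the remaining steps are routine Cauchy--Schwarz and trace estimates, and the symmetry reductions to indices in $\Sigma$ cause no loss in the constant $M$.
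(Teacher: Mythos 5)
Your proposal is correct and follows essentially the same route as the paper: iterate Green's second identity to turn $\langle f,U_i\rangle$ into boundary integrals of the Cauchy data of $u$ against $\Delta^\ell U_i$ and $\partial_\nu\Delta^\ell U_i$, take the expectation of the product to produce the correlation functions $F^\alpha_{i,j}$, and conclude by Cauchy--Schwarz and trace estimates. The details you add beyond the paper's terse final step (the Fubini/Cauchy--Schwarz bookkeeping, the symmetry reduction of index pairs into $\Sigma$, and the justification of the traces of $u$ on $\partial B_R$ via interior regularity) are accurate fill-ins rather than a different argument.
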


\begin{proof}
By employing the equation $(-\Delta)^n u-k^{2n}u=f$ and applying integration by parts, we obtain
\begin{align*}
 \langle f, U_i\rangle  &= \int_{B_R} f(x) U_i(x)dx\\
 &= \int_{B_R}\left((-\Delta)^n u(x) -k^{2n} u(x)\right) U_i(x)dx\\
 &=\int_{B_R} (-\Delta)^n u(x)U_i(x)dx -k^{2n}\int_{B_R} u(x) U_i(x)dx\\
 &=\int_{B_R} u(x) (-\Delta)^n U_i(x)dx -k^{2n}\int_{B_R} u(x)  U_i(x)dx \\
 &\quad +(-1)^n\sum_{j=1}^n\left( \int_{\partial B_R} \Delta^{n-j} U_i(x)\partial_\nu \Delta^{j-1} u(x)ds(x) -\int_{\partial B_R} \Delta^{j-1} u(x)\partial_\nu\Delta^{n-j} U_i(x)ds\right)\\
&=(-1)^n\sum_{j=1}^n\left( \int_{\partial B_R} \Delta^{n-j} U_i(x)\partial_\nu \Delta^{j-1} u(x)ds(x) -\int_{\partial B_R} \Delta^{j-1} u(x)\partial_\nu\Delta^{n-j} U_i(x)ds\right), 
 \end{align*}
 where we have utilized $(-\Delta)^nU_i-k^{2n}U_i=0$ for $i=1, 2$. From straightforward calculations, we derive
\begin{align}
&\mathbb{E}\left[\langle f, U_1\rangle \langle f, U_2\rangle\right]  \nonumber\\
&= \int_{\partial B_R} \int_{\partial B_R}\mathbb{E}\bigg[\sum_{j=1}^n\left( \Delta^{n-j} U_1(x)\partial_\nu \Delta^{j-1} u(x)ds(x) - \Delta^{j-1} u(x)\partial_\nu\Delta^{n-j} U_1(x)\right)\nonumber\\
&\qquad \times \sum_{j=1}^n\left(  \Delta^{n-j} U_2(y)\partial_\nu \Delta^{j-1} u(y) - \Delta^{j-1} u(y)\partial_\nu\Delta^{n-j} U_2(y)\right) \bigg]ds(x)ds(y)\nonumber\\
&\lesssim  M\Vert U_1\Vert_{H^{2n}(B_{\hat{R}})}\Vert U_2\Vert_{H^{2n}(B_{\hat{R}})},\nonumber 
\end{align}
which completes the proof. 
\end{proof}

Next, we employ special solutions to the polyharmonic equation, in the absence of a potential, to derive an estimate for the Fourier coefficients $\hat{\sigma}(\gamma)$ for $|\gamma| < 2k$.

\begin{lemma}\label{lem:Fourierlow_white_homo}
For all $\gamma\in\mathbb{R}^3$ with $|\gamma|< 2k$, there exists a constant $c_3>0$, which depends on $k$, $R$, and $\hat R$, such that the following estimate holds:
\[
|\hat{\sigma}(\gamma)|\leq c_3 M.
\]
\end{lemma}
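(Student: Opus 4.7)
The plan is to reduce the estimation of $\hat\sigma(\gamma)$ to the covariance bound of Lemma \ref{lem:Ewhitehomo} by constructing two explicit solutions $U_1,U_2$ of the homogeneous polyharmonic equation such that the product $U_1(x)U_2(x)$ is precisely the Fourier kernel $e^{-\ii\gamma\cdot x}$, and then computing the covariance via the white-noise isometry.

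The first step is to exhibit the special solutions. Using the operator splitting $(-\Delta)^n-k^{2n}=\prod_{j=0}^{n-1}(-\Delta-\kappa_j^2)$, any function annihilated by $-\Delta-k^2$ (the $j=0$ factor) automatically solves the polyharmonic equation. For $|\gamma|<2k$, choose any unit vector $e\in\mathbb{R}^3$ with $e\perp\gamma$ and set
\[
\xi_1=-\tfrac{\gamma}{2}+\sqrt{k^2-|\gamma|^2/4}\;e,\qquad \xi_2=-\tfrac{\gamma}{2}-\sqrt{k^2-|\gamma|^2/4}\;e.
\]
Both vectors are real with $|\xi_1|^2=|\xi_2|^2=k^2$ and $\xi_1+\xi_2=-\gamma$, so $U_i(x):=e^{\ii\xi_i\cdot x}$ solves $(-\Delta-k^2)U_i=0$ and hence $(-\Delta)^n U_i-k^{2n}U_i=0$. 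The $H^{2n}(B_{\hat R})$ norms are controlled explicitly by $C(\hat R)(1+k)^{2n}$, since $\partial^\alpha U_i=(\ii\xi_i)^\alpha U_i$ and $|U_i|=1$.

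The second step is the covariance identity. Since $f=\sqrt{\sigma}\,\dot W_x$ with $\sqrt\sigma\in H^{3/2+\delta}(B_1)$ and the $U_i$ are smooth on $\overline{B_R}$, each pairing $\langle f,U_i\rangle=\int_{B_R}\sqrt\sigma(x)U_i(x)\,dW_x$ is a well-defined complex Gaussian random variable. Splitting $U_i$ into real and imaginary parts and applying the It\^o isometry of the real-valued spatial white noise componentwise gives
\[
\E\bigl[\langle f,U_1\rangle\,\langle f,U_2\rangle\bigr]=\int_{B_1}\sigma(x)\,U_1(x)U_2(x)\,dx=\int_{B_1}\sigma(x)\,e^{-\ii\gamma\cdot x}\,dx=\hat\sigma(\gamma),
\]
where the support $B_1$ of $\sigma$ has been used to restrict the integral.

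Combining the two steps with Lemma \ref{lem:Ewhitehomo} yields
\[
|\hat\sigma(\gamma)|=\bigl|\E[\langle f,U_1\rangle\langle f,U_2\rangle]\bigr|\le c_2 M\,\|U_1\|_{H^{2n}(B_{\hat R})}\|U_2\|_{H^{2n}(B_{\hat R})}\le c_3 M,
\]
with $c_3$ depending only on $k$, $R$, and $\hat R$ through the explicit plane-wave norm bound. The only subtle point is the covariance identity for the rough product $\sqrt\sigma\,\dot W$ paired against complex-valued test functions; this is the one place requiring care, but it follows routinely from the real It\^o isometry applied to the real and imaginary parts of $U_1$ and $U_2$. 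The remaining choices, namely the existence of a real unit vector orthogonal to $\gamma$ and the reality of the square root, are immediate from the strict inequality $|\gamma|<2k$ and the fact that the ambient dimension is three.
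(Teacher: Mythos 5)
Your proposal is correct and follows essentially the same route as the paper: the same plane-wave solutions $e^{\ii\xi_i\cdot x}$ with $\xi_1+\xi_2=-\gamma$ and $|\xi_i|^2=k^2$, the same It\^o-isometry computation of the covariance, and the same application of Lemma \ref{lem:Ewhitehomo}. The only differences are cosmetic (your Fourier normalization drops the paper's $(2\pi)^3$ factor, and you spell out the plane-wave $H^{2n}$ bound and the factorization argument that the paper leaves as ``it can be verified'').
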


\begin{proof}
We construct two special solutions to the polyharmonic equation $(-\Delta)^{n} u -k^{2n} u =0$. For a fixed multi-index $\gamma\in \mathbb{R}^3$, we choose a unit vector $d\in\mathbb{R}^3$ satisfying $ d\cdot \gamma = 0$, and define two real vectors 
\begin{align*}
\xi^{(1)} = -\frac{1}{2}\gamma  +\Bigl(k^2-\frac{|\gamma|^2}{4}\Bigr)^{1/2} d,\quad
\xi^{(2)} =-\frac{1}{2}\gamma -\Bigl(k^2-\frac{|\gamma|^2}{4}\Bigr)^{1/2} d.
\end{align*}
Clearly, we have $\xi^{(1)}+\xi^{(2)} = -\gamma$, $\xi^{(1)}\cdot \xi^{(1)}= \xi^{(2)}\cdot \xi^{(2)} = k^2$. Consider two functions 
\begin{eqnarray*}
U_i(x,\xi^{(i)}) =e^{\ii  \xi^{(i)}\cdot x},\quad i=1,2.
\end{eqnarray*}
It can be verified that 
\[
((-\Delta)^{n}  -k^{2n}  ) U_i = 0.
\]
By the It\^{o} isometry, we have
 \begin{align*}
\E[\langle f, U_1\rangle \langle f, U_2\rangle]  &= \int_{\mathbb{R}^3} \int_{\mathbb{R}^3} U_1(x) U_2(y) \E [f(x)f(y)]dxdy\\
&=     \int_{\mathbb{R}^3} \sigma(x) e^{-\ii\gamma\cdot x}dx=(2\pi)^3\hat{\sigma}(\gamma).
\end{align*}
Using Lemma \ref{lem:Ewhitehomo}, we establish the existence of a positive constant $c_3$, depending on $k$, $R$, and $\hat R$, such that
\begin{equation}\label{eq:estlow_homo}
 |\hat{\sigma}(\gamma) |\leq c_2 (2\pi)^{-3} M \Vert U_1\Vert_{H^{2n}(B_{\hat R})} \Vert U_2\Vert_{H^{2n}(B_{\hat R})} \leq c_3 M,
\end{equation}
which completes the proof.
\end{proof}

Using the previously presented lemmas, we establish a stability estimate for the source strength $\sigma$ in the context of the polyharmonic wave equation without a potential. The proof closely follows the method outlined in \cite[Theorem 1.1]{Li2023stability} and is briefly summarized here for completeness.

\begin{theorem}\label{thm:white_homo}
There exist a positive constant $k_0$, depending on $s$, $M$, and $R$, and a positive constant $C_1$, depending on  $s$, $k$, and $R$, such that if  $k>k_0$,  the following estimate holds:
\begin{equation*}
\Vert \sigma\Vert_{L^\infty(B_1)}\leq  C_1 M^{1-\frac{3}{s}}. 
\end{equation*}
\end{theorem}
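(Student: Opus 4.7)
The plan is to bound $\|\sigma\|_{L^\infty(B_1)}$ by $\|\hat\sigma\|_{L^1(\mathbb R^3)}$ via Fourier inversion, then split the Fourier integral at a scale $\et>0$ chosen to balance the low- and high-frequency contributions. Since $\sigma\in H^s_0(B_1)$ with $s>3$, a Cauchy--Schwarz against the weight $(1+|\gamma|^2)^{-s}$ (integrable because $s>3/2$) will give $\|\hat\sigma\|_{L^1}\lesssim \|\sigma\|_{H^s}<\infty$, so Fourier inversion holds pointwise and yields
\[
\|\sigma\|_{L^\infty(B_1)} \lesssim \int_{\mathbb R^3}|\hat\sigma(\gamma)|\,d\gamma.
\]

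For the low-frequency piece I would take $\et<2k$ and invoke Lemma \ref{lem:Fourierlow_white_homo} to bound every $|\hat\sigma(\gamma)|$ by $c_3 M$, producing
\[
\int_{|\gamma|<\et}|\hat\sigma(\gamma)|\,d\gamma \lesssim c_3 M\, \et^3.
\]
For the tail, \eqref{eqn:fourierhigh} yields $\int_{|\gamma|>\et}|\hat\sigma(\gamma)|\,d\gamma\le c_1\et^{3-s}$. Combining the two gives
\[
\|\sigma\|_{L^\infty(B_1)} \lesssim c_3(k,R)\, M\, \et^3 + c_1\,\et^{3-s}.
\]
The natural choice is to balance $M\et^3\sim \et^{3-s}$, which forces $\et = M^{-1/s}$ and sends both terms to order $M^{1-3/s}$. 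The admissibility constraint $\et<2k$ then reads $k > \tfrac12 M^{-1/s}$; this, together with the large-$k$ threshold from Theorem \ref{thm:exist_inhomo}, defines the lower bound $k_0(s,M,R)$ in the statement, while the polynomial $k$-growth of $c_3$ (coming from $\|U_i\|_{H^{2n}(B_{\hat R})}\lesssim(1+k)^{2n}$) is absorbed into $C_1=C_1(s,k,R)$.

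The argument follows \cite[Theorem 1.1]{Li2023stability} almost verbatim once the polyharmonic ingredients are in place, so there is no genuine analytic obstacle: Lemma \ref{PWS} is equation-free, and the low-frequency input (Lemma \ref{lem:Fourierlow_white_homo}) is particularly clean here because the plane waves $e^{\ii\xi\cdot x}$ with $\xi\cdot\xi=k^2$ already solve $(-\Delta)^n u - k^{2n} u = 0$ and no CGO correction is needed. The one bookkeeping point to check is precisely the compatibility of the optimal scale $\et = M^{-1/s}$ with the admissibility window $\et<2k$ of Lemma \ref{lem:Fourierlow_white_homo}; this compatibility is what forces $k_0$ to depend on $M$ (in addition to $s$ and $R$) as in the statement.
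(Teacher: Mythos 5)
Your proposal is correct and follows essentially the same route as the paper: Fourier inversion, splitting at a cutoff $\et$, Lemma \ref{lem:Fourierlow_white_homo} for $|\gamma|\le\et$, the Paley--Wiener decay \eqref{eqn:fourierhigh} for the tail, and a choice $\et\sim M^{-1/s}$ balancing the two contributions, with the constraint $\et\le 2k$ generating the $M$-dependent threshold $k_0$. The only cosmetic difference is that the paper balances with the constant $c_3=\bar c_3k^{4n}$ included (taking $\et=(3c_1/(4\pi c_3M))^{1/s}$, hence $k_0=(3c_1/(2^{s+2}\pi\bar c_3M))^{1/(s+4n)}$), whereas you take the bare $\et=M^{-1/s}$ and absorb the $k$-dependence into $C_1$; both yield the stated H\"older estimate.
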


\begin{proof}
From Lemma \ref{lem:Fourierlow_white_homo} and \eqref{eqn:fourierhigh}, it follows that for $\zeta \leq 2k$,
\begin{align*}
\Vert \sigma\Vert_{L^\infty(B_1)} &\leq \sup_{x\in B_1}\bigg|\int_{\mathbb{R}^3} e^{\ii \gamma\cdot x} \hat{\sigma}(\gamma)d\gamma\bigg|\\
&\leq \int_{|\gamma|\leq \zeta} |\hat{\sigma}(\gamma)|d\gamma+ \int_{|\gamma|>\zeta} |\hat{\sigma}(\gamma)|d\gamma\leq \frac{4\pi \zeta^3}{3} c_3 M + c_1\zeta^{3-s}. 
\end{align*}
Recall that, by Lemma \ref{lem:Fourierlow_white_homo} and \eqref{eq:estlow_homo}, there exists a constant $\bar{c}_3$ independent of $k$ such that $c_3 = \bar{c}_3 k^{4n}$. Taking $k_0 =\left(\frac{3c_1}{2^{s+2}\pi \bar{c}_3 M}\right)^{\frac{1}{s+4n}}$ and assuming  $k>k_0$,  we observe that $\left(\frac{3c_1}{4\pi c_3 M}\right)^{\frac{1}{s}}\leq 2k$.  Substituting $\zeta = \left(\frac{3c_1}{4\pi c_3 M}\right)^{\frac{1}{s}}$ into the above estimates, we establish the existence of a positive constant $C_1$ such that
\begin{equation*}
\Vert \sigma\Vert_{L^\infty(B_1)}\leq C_1 M^{1-\frac{3}{s}},
\end{equation*}
thereby completing the proof.
\end{proof}

It is worth noting that another form of stability can be deduced. Specifically, we have
\begin{align*}
\Vert \sigma\Vert_{L^\infty(B_1)} &\leq \int_{|\gamma|\leq \zeta} |\hat{\sigma}(\gamma)|d\gamma+ \int_{|\gamma|>\zeta} |\hat{\sigma}(\gamma)|d\gamma\leq \frac{4\pi \zeta^3}{3} c_3 M + c_1\zeta^{3-s}. 
\end{align*}
By Lemma \ref{lem:Fourierlow_white_homo}, there exists a constant $\bar{c}_3$, independent of $k$, such that $c_3 = \bar{c}_3 k^{4n}$. Taking $\xi = k^{\frac{1}{s}}$, we obtain 
\begin{align*}
\Vert \sigma\Vert_{L^\infty(B_1)} &\leq \frac{4\pi}{3} \bar{c}_3 k^{4n+\frac{3}{s}} M + c_1k^{\frac{3}{s}-1}\\
&\leq k^{\frac{s}{3}}(\frac{4\pi}{3} \bar{c}_3 k^{4n} M + c_1k^{-1}). 
\end{align*}
Letting $k_0 =\left(\frac{3c_1}{4\pi \bar{c}_3 M}\right)^{\frac{1}{1+4n}}$ and assuming $k>k_0$,  we observe that $\frac{4\pi}{3} \bar{c}_3 k^{4n} M>c_1k^{-1}$. Thus, we establish the existence of a positive constant $C_1$ such that the following Lipschitz stability is achieved:
\begin{equation*}
\Vert \sigma\Vert_{L^\infty(B_1)}\leq C_1 M. 
\end{equation*}

\subsection{With a potential}

We now investigate the stability estimates for the polyharmonic equation with a potential term. We utilize special solutions to the equation
\begin{equation*}
(-\Delta)^n u- k^{2n} u+q u = 0,
\end{equation*}
which are known as complex geometric optics (CGO) solutions and possess the structure described in the following lemma. The proof is similar to that presented in \cite[Proposition 2.4]{Krupchyk14} and is included here for completeness.

\begin{lemma}\label{lem:CGO}
 For all $\xi\in\mathbb{C}^3$ satisfying $\xi\cdot\xi=0$ and $|\Im (\xi)|\geq c_4$, where $c_4$ is a positive constant depending on $k$ and $\hat{R}$, there exists a solution  $u(x,\xi)\in H^{2n}(B_{\hat{R}})$ to the equation $
 (-\Delta)^n u-k^{2n} u+q u = 0$. Moreover, the solution can be expressed in the form 
 \[
u(x,\xi) =e^{\ii x\cdot \xi}(1+v(x,\xi)),
\]
where $v(x,\xi)$ satisfies
\[
\Vert v(\cdot,\xi)\Vert_{L^2(B_{\hat{R}})} \leq \frac{c_5}{|\Im(\xi)|^n}, 
\]
with $c_5$ being a positive constant. 
\end{lemma}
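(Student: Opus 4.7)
The plan is to substitute the ansatz $u=e^{\ii x\cdot\xi}(1+v)$, derive an inhomogeneous equation for $v$, and solve it by a Neumann series argument based on a right inverse of a polyharmonic Faddeev-type operator whose $L^2$ operator norm decays like $|\Im(\xi)|^{-n}$. Using $\xi\cdot\xi=0$, the conjugation identity
\[
e^{-\ii x\cdot\xi}(-\Delta)^n\bigl[e^{\ii x\cdot\xi}w\bigr]=P_\xi^n w,\qquad P_\xi:=-\Delta-2\ii\,\xi\cdot\nabla,
\]
together with $P_\xi(1)=0$, reduces $(-\Delta)^n u-k^{2n}u+qu=0$ to the inhomogeneous problem
\[
(P_\xi^n-k^{2n})v+qv=k^{2n}-q\quad\text{in } B_{\hat R}.
\]

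To invert the principal part $P_\xi^n-k^{2n}$, I exploit the same splitting already appearing in \eqref{os}, namely
\[
P_\xi^n-k^{2n}=\prod_{j=0}^{n-1}(P_\xi-\kappa_j^2),\qquad \kappa_j=ke^{\ii j\pi/n}.
\]
Each factor is a constant-shifted version of the standard Faddeev operator. The classical Sylvester--Uhlmann estimate on weighted $L^2_\delta(\mathbb R^3)$ spaces furnishes a right inverse of $P_\xi$ of norm $\lesssim|\Im(\xi)|^{-1}$; combined with extension by zero from $B_{\hat R}$ and restriction, this descends to a bounded right inverse on $L^2(B_{\hat R})$. The constant shift $\kappa_j^2$ is absorbed by a Neumann-series resolvent expansion, convergent once $|\Im(\xi)|\gtrsim k^2$. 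Since the individual Faddeev inverses are translation-invariant Fourier multipliers on $\mathbb R^3$, they commute with each other and with every $(P_\xi-\kappa_i^2)$, so the composition $G_\xi:=\prod_j(P_\xi-\kappa_j^2)^{-1}$ is a right inverse of $P_\xi^n-k^{2n}$ satisfying
\[
\|G_\xi\|_{L^2(B_{\hat R})\to L^2(B_{\hat R})}\lesssim |\Im(\xi)|^{-n}.
\]

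With $G_\xi$ in hand the equation for $v$ is recast as the fixed-point problem $(I+G_\xi q)v=G_\xi(k^{2n}-q)$. Choosing $c_4$ large enough that $\|G_\xi q\|_{L^2\to L^2}\le\|q\|_{L^\infty(B_1)}\|G_\xi\|<\tfrac{1}{2}$, a Neumann series yields a unique $v\in L^2(B_{\hat R})$ with
\[
\|v\|_{L^2(B_{\hat R})}\le 2\,\|G_\xi(k^{2n}-q)\|_{L^2(B_{\hat R})}\le\frac{c_5}{|\Im(\xi)|^n}.
\]
Finally, since $u=e^{\ii x\cdot\xi}(1+v)\in L^2(B_{\hat R})$ satisfies $(-\Delta)^n u=(k^{2n}-q)u\in L^2$, interior regularity for the elliptic polyharmonic operator promotes $u$ to $H^{2n}_{\mathrm{loc}}$; performing the construction on a slightly enlarged ball $B_{\hat R+\epsilon}$ and restricting delivers the claimed $H^{2n}(B_{\hat R})$ regularity.

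The main obstacle is the norm bound $\|(P_\xi-\kappa_j^2)^{-1}\|_{L^2(B_{\hat R})\to L^2(B_{\hat R})}\lesssim|\Im(\xi)|^{-1}$ for the shifted Faddeev inverse, together with the justification that composing $n$ such operators preserves the full $|\Im(\xi)|^{-n}$ decay. The unshifted $L^2$ bound is the classical Faddeev multiplier estimate on weighted $L^2_\delta(\mathbb R^3)$ spaces, transported to $B_{\hat R}$ via extension and restriction; absorbing the constant shifts $\kappa_j^2$ and iterating the composition requires uniform control of the constants in $k$ and $\hat R$, which dictates both the threshold $c_4$ and the dependence of $c_5$ on these parameters.
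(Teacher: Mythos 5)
Your overall strategy (conjugate by $e^{\ii x\cdot\xi}$, factor the principal part using the same splitting as \eqref{os}, invert each factor with a $|\Im(\xi)|^{-1}$ gain, then run a Neumann series to absorb $q$) is a genuinely different route from the paper, which instead proves a semiclassical Carleman estimate for $e^{\varphi/h}(-h^2\Delta)e^{-\varphi/h}$ with linear weight, iterates it $n$ times to get $\Vert\mathcal L_\varphi u\Vert_{H^s_{scl}}\gtrsim h^n\Vert u\Vert_{H^{s+1}_{scl}}$, and then obtains the solution operator of norm $O(h^n)$ by duality (following Krupchyk--Lassas--Uhlmann). The Carleman route has the advantage that every estimate lives on the same space $L^2(B_{\hat R})$, so iterating $n$ times is immediate.

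That is precisely where your argument has a genuine gap: the composition step. The classical Sylvester--Uhlmann bound for the Faddeev inverse is $\Vert G^{(j)}_\xi f\Vert_{L^2_{\delta}}\lesssim|\Im(\xi)|^{-1}\Vert f\Vert_{L^2_{\delta+1}}$ with $-1<\delta<0$; its output lies in a space with \emph{negative} weight while the next factor requires input with \emph{positive} weight $\delta+1>0$, so the weighted estimates do not chain for $n\ge 2$. Your proposed fix --- truncating each factor by extension-by-zero and restriction to $B_{\hat R}$ --- produces operators that are bounded on $L^2(B_{\hat R})$, but their composition is no longer a right inverse of $\prod_j(P_\xi-\kappa_j^2)$: the intermediate restrictions discard the part of $(P_\xi-\kappa_j^2)^{-1}f$ living outside $B_{\hat R}$, and the commutativity of the untruncated multipliers does not survive the truncation. (A pointwise lower bound on the product symbol is also unavailable, since each factor's symbol vanishes on a codimension-two set.) To repair this along your lines you would need either periodic Faddeev operators on a cube containing $B_{\hat R}$ in the style of H\"ahner --- where each inverse is a genuine discrete Fourier multiplier bounded on $L^2$ of the cube, so the $n$-fold composition and the $|\Im(\xi)|^{-n}$ decay are legitimate --- or the paper's Carleman/duality argument. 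The remaining steps (the conjugation identity, $P_\xi(1)=0$, the reduction to $(P_\xi^n-k^{2n})v+qv=k^{2n}-q$, the Neumann series once $\Vert G_\xi q\Vert<1/2$, and the interior-regularity upgrade to $H^{2n}$ on a slightly smaller ball) are correct and match the paper's conclusions.
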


\begin{proof}
We adopt the method of Carleman estimates to construct a solution for the polyharmonic wave equation of the following form:
\[
u(x,\xi) =e^{\ii x\cdot \xi}(1+v(x,\xi)),
\]
where $\xi\in\mathbb{C}^3$ and $\xi\cdot\xi =0$. We first introduce the Carleman estimate for the semi-classical polyharmonic operator $(-h^2\Delta)^n$, where $h>0$ is a real constant. Consider the pseudo-differential operator
\[
P_\varphi = e^{\frac{\varphi}{h}}(-h^2\Delta)e^{-\frac{\varphi}{h}}, 
\]
which has the principal symbol
\[
p_\varphi(x,\xi) = \xi^2+2\ii \nabla\varphi\cdot \xi-|\nabla\varphi|^2. 
\]
Here, $\varphi(x)=\alpha\cdot x$ denotes the linear weight, with $\alpha\in \mathbb{R}^3$ and $|\alpha|=1$. The Carleman estimate, as proven in \cite{Kenig2007calderon}, states that for all $u\in\mathbb{C}_0^\infty(\Omega)$,
\begin{equation*}
\Vert  e^{\frac{\varphi}{h}}(-h^2\Delta)e^{-\frac{\varphi}{h}} u\Vert_{H^s_{scl}}\geq \frac{h}{C_{s,\Omega}}\Vert u\Vert_{H_{scl}^{s+1}},
\end{equation*}
where $\Vert\cdot\Vert^s_{scl}$ denotes the semi-classical norm defined by $\Vert u\Vert_{scl}^s = \Vert \langle hD\rangle^s u\Vert_{L^2}$ with $\langle\xi\rangle=(1+|\xi|^2)^{1/2}$. By iterating this estimate $n$ times, we obtain the estimate for the polyharmonic operator:
\begin{equation*}
\Vert  e^{\frac{\varphi}{h}}(-h^2\Delta)^ne^{-\frac{\varphi}{h}} u\Vert_{H^s_{scl}}\geq \frac{h^n}{C_{s,\Omega}}\Vert u\Vert_{H_{scl}^{s+1}}.
\end{equation*}

Next we introduce the perturbation $-k^{2n}+q$. Let $\mathcal{L}_q: =  (-\Delta)^n - k^{2n} +q$ and define
\[
\mathcal{L}_\varphi = e^{\frac{\varphi}{h}}h^{2n} \mathcal{L}_q e^{-\frac{\varphi}{h}}.
\]
For $0<h\leq (\Vert -k^{2n}+q\Vert^{-1}_{L^\infty})^{\frac{1}{n}}$, we have for all $u\in C_0^\infty(\Omega)$,
\begin{equation}\label{appen:estphi}
\Vert \mathcal{L}_\varphi  u \Vert_{H^s_{scl}}\geq \frac{h^n}{C_{s,\Omega,q}}\Vert u\Vert_{H_{scl}^{s+1}}.
\end{equation}
Following the proof of \cite[Proposition 2.3]{Krupchyk14}, it can be shown using \eqref{appen:estphi} that  for any $v \in L^2(B_{\hat{R}})$, there exits a solution $u \in H^1(B_{\hat{R}})$ of the equation
\[
L_\varphi u = v\quad \text{in} \ B_{\hat{R}},
\]
which satisfies
\[
\Vert u\Vert_{H_{scl}^1}\leq  \frac{C}{h^n}\Vert v\Vert_{L^2}.
\]

We now construct the solution to the equation $\mathcal{L}_q u =0$ in the form 
\begin{equation}\label{appen:cgoform}
u= e^{\ii x\cdot\xi}(1+ v(x,\xi)),
\end{equation}
where $\xi\cdot\xi=0$ and $|\Re \xi|=|\Im \xi| = \frac{1}{h}$. Consider
\[
e^{-\ii x\cdot\xi} h^{2n}  \mathcal{L}_q  e^{\ii x\cdot\xi} = (-h^2\Delta -2\ii \xi\cdot\nabla)^n+h^{2n} (-k^{2n}+q).
\]
Note that 
\[
e^{-\ii x\cdot\xi} h^{2n}  \mathcal{L}_q  e^{\ii x\cdot\xi} 1 = h^{2n} (-k^{2n}+q).
\]
If the function $u$ in the form \eqref{appen:cgoform} satisfies the polyharmonic equation, then the remainder $v$ satisfies
\[
e^{-\ii x\cdot\xi} h^{2n}  \mathcal{L}_q  e^{\ii x\cdot\xi} v = -e^{-\ii x\cdot\xi} h^{2n}  \mathcal{L}_q  e^{\ii x\cdot\xi} 1,
\]
which leads to 
\begin{equation}\label{ap:resi}
e^{x\cdot\Im \xi} h^{2n}  \mathcal{L}_q  e^{-x\cdot\Im\xi}  e^{\ii x\cdot\Re\xi} v  = -e^{\ii x\cdot\Re \xi}  e^{-\ii x\cdot \xi} h^{2n}  \mathcal{L}_q  e^{\ii x\cdot\xi} 1.
\end{equation}

Note that $e^{x\cdot\Im \xi} h^{2n}  \mathcal{L}_q  e^{-x\cdot\Im\xi} =\mathcal{L}_\varphi$ with $\varphi(x) = h\Im\xi\cdot x$. Thus, the equation \eqref{ap:resi} is solvable, and the solution $v$ satisfies
\begin{align*}
\Vert e^{\ii x\cdot\Re\xi} v\Vert_{H_{scl}^1}&\leq \frac{C}{h^n}\Vert e^{\ii x\cdot\Re \xi}  e^{-\ii x\cdot \xi} h^{2n}  \mathcal{L}_q  e^{ix\cdot\xi} 1\Vert_{L^2}\\
&\leq C h^n (k^{2n} + 1),
\end{align*}
where $C$ is a generic positive constant. Noting that 
\[
\Vert v\Vert_{H_{scl}^1}^2 = \Vert v\Vert_{L^2}^2+ \Vert h\nabla v\Vert_{L^2}^2,
\]
we obtain
\[
\Vert e^{\ii x\cdot\Re\xi} v\Vert_{L^2(B_{\hat{R}})} = \Vert v\Vert_{L^2(B_{\hat{R}})}\leq C h^n {(k^{2n} + 1)}.
\]
Since $u$ is the  solution to the polyharmonic wave equation $(-\Delta)^n u= k^{2n} u -qu$, we conclude that $u\in H^{2n}(B_{\hat{R}})$ based on the interior regularity estimate (cf. \cite[Theorem 6.29]{grubb2008distributions}).
\end{proof}

The following result establishes a connection between the internal distribution of the random source and the measurements of the wave field $u$ on $\partial B_R$ for the wave equation with a potential term. The proof is similar to that of Lemma \ref{lem:Ewhitehomo} and is based on integration by parts; therefore, it is omitted.

\begin{lemma}\label{lem:Ewhitehomo_potential}
For two solutions $U_i\in H^{2n}\left(\overline{B_{\hat R}}\right)$, $i=1,2$, of the equation $(-\Delta)^nu-k^{2n}u+qu=f$ with $\hat R>R>1$, there exists a positive constant $c_6$, depending on $R$ and $\hat R$, such that 
\begin{equation}\label{Ewhitehomo-s2}
|\E[\langle f, U_1\rangle \langle f, U_2\rangle]| \leq c_6 M \Vert U_1\Vert_{H^{2n}(B_{\hat R})} \Vert U_2\Vert_{H^{2n}(B_{\hat R})}. 
\end{equation}
\end{lemma}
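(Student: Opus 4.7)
The approach parallels the proof of Lemma \ref{lem:Ewhitehomo}, with the key observation that the potential term $qu$ appearing in the equation for $u$ is compensated by the corresponding term in the equation satisfied by $U_i$. Reading the statement in light of Lemma \ref{lem:CGO}, the intended test functions $U_i$ are homogeneous solutions of the potential equation $(-\Delta)^n U_i - k^{2n} U_i + q U_i = 0$ in $B_{\hat R}$ (the CGO solutions just constructed play this role in the sequel). The plan is therefore to insert the PDE for $u$ into $\langle f, U_i\rangle = \int_{B_R} f U_i\, dx$, transfer the polyharmonic operator from $u$ onto $U_i$ by iterating Green's second identity $n$ times, and cancel the resulting volume contribution against the homogeneous equation for $U_i$.

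Concretely, starting from
\begin{equation*}
\langle f, U_i\rangle = \int_{B_R}\bigl((-\Delta)^n u - k^{2n} u + q u\bigr) U_i\, dx
\end{equation*}
and integrating by parts $n$ times in the leading term, I obtain exactly the same boundary sum
\begin{equation*}
(-1)^n \sum_{j=1}^n \left( \int_{\partial B_R} \Delta^{n-j} U_i\, \partial_\nu \Delta^{j-1} u\, ds - \int_{\partial B_R} \Delta^{j-1} u\, \partial_\nu \Delta^{n-j} U_i\, ds \right)
\end{equation*}
as in Lemma \ref{lem:Ewhitehomo}, plus a bulk contribution $\int_{B_R} u\bigl((-\Delta)^n U_i - k^{2n} U_i + q U_i\bigr)dx$. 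The $qu$ factor in the equation for $u$ is precisely what produces the $qU_i$ contribution inside the parentheses, and the whole bulk integrand vanishes by the homogeneous equation satisfied by $U_i$. Consequently $\langle f, U_i\rangle$ reduces to the same boundary expression as in the potential-free setting.

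From this point the argument is essentially verbatim the one in the proof of Lemma \ref{lem:Ewhitehomo}. Expanding $\langle f, U_1\rangle\langle f, U_2\rangle$ as a double boundary integral, exchanging expectation and integration by Fubini, and recognizing each resulting factor as one of the correlation kernels $F^\alpha_{i,j}$ with indices in $\{0,\dots,n-1\}$, one bounds the sum by Cauchy--Schwarz together with the $L^2(\partial B_R\times \partial B_R)$-estimate $M$. The boundary traces $\Delta^j U_i|_{\partial B_R}$ and $\partial_\nu \Delta^j U_i|_{\partial B_R}$ for $0\leq j\leq n-1$ are controlled by $\|U_i\|_{H^{2n}(B_{\hat R})}$ via the standard trace theorem (the condition $\hat R>R$ allows one to work with constants depending only on $R$ and $\hat R$), yielding the desired bound \eqref{Ewhitehomo-s2}.

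I do not anticipate any genuine obstacle: the algebraic cancellation between the $qu$ and $qU_i$ contributions is automatic once the appropriate class of test functions is used, and the remaining manipulations coincide with those in the homogeneous case. The only technical point is the validity of the boundary traces of $\Delta^j u$ and $\partial_\nu \Delta^j u$ on $\partial B_R$; this has already been established in the discussion preceding the definition of $F^\alpha_{i,j}$, where interior regularity of weak solutions to higher-order elliptic equations was invoked to guarantee $u \in H^{2n}_{loc}$ in a neighbourhood of $\partial B_R$.
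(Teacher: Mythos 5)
Your proposal is correct and takes essentially the same route the paper intends: the paper omits the proof of this lemma, stating only that it is ``similar to that of Lemma \ref{lem:Ewhitehomo} and is based on integration by parts,'' and your reconstruction --- transferring $(-\Delta)^n$ onto $U_i$ by iterated Green's identities and cancelling the bulk term $\int_{B_R} u\bigl((-\Delta)^n U_i-k^{2n}U_i+qU_i\bigr)\,dx$ against the homogeneous equation satisfied by the (CGO) test functions --- is exactly that argument, including the correct observation that the ``$=f$'' in the lemma's statement should be read as ``$=0$'' for the $U_i$.
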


We can now present estimates for the Fourier coefficients $\hat{\sigma}(\gamma)$ associated with the low frequency modes, utilizing the CGO solutions.

\begin{lemma}\label{lem:Fourierlow}
Assume that $\zeta\geq 2$. Let $t_0 =\sqrt{c_4^2+ \zeta^2}$, where $c_4$ is defined in Lemma \ref{lem:CGO} for $\hat R= 2R$. Then, there exists a positive constant $c_7$ depending on $k$ and $R$, such that for all $\gamma\in\mathbb{R}^3$ with $|\gamma|< \zeta$ and for all $t>t_0$, the following estimate holds:
\[
|\hat{\sigma}(\gamma)|\leq c_7\left( M e^{4Rt}+\frac{\Vert \sigma\Vert_{L^\infty(B_1)} }{t^n}\right).
\]
\end{lemma}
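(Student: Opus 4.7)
The plan is to mirror the strategy of Lemma \ref{lem:Fourierlow_white_homo}, replacing the plane-wave exponentials (which no longer satisfy the perturbed equation) by the CGO solutions from Lemma \ref{lem:CGO}. The presence of the potential will manifest as an additive error involving the CGO remainders $v_i$, whose smallness in $L^2$ produces the $\|\sigma\|_{L^\infty(B_1)}/t^n$ term, while the dependence on the measurement quantity $M$ arises exactly as in the potential-free case through Lemma \ref{lem:Ewhitehomo_potential}.

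First, I would build two complex vectors $\xi^{(1)},\xi^{(2)}\in\mathbb{C}^3$ tailored so that $U_1 U_2$ reproduces the Fourier kernel $e^{-\ii\gamma\cdot x}$ up to the CGO correction. For fixed $\gamma$ with $|\gamma|<\zeta$, I pick two real orthonormal vectors $d_1, d_2$ each orthogonal to $\gamma$ and set
\[
\xi^{(1)} = -\frac{\gamma}{2} + \sqrt{t^2 - \tfrac{|\gamma|^2}{4}}\,d_1 + \ii\,t\,d_2, \qquad \xi^{(2)} = -\frac{\gamma}{2} - \sqrt{t^2 - \tfrac{|\gamma|^2}{4}}\,d_1 - \ii\,t\,d_2.
\]
A direct computation yields $\xi^{(i)}\cdot\xi^{(i)} = 0$, $\xi^{(1)}+\xi^{(2)} = -\gamma$, and $|\Im\xi^{(i)}| = t$. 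The hypothesis $t > t_0 = \sqrt{c_4^2+\zeta^2}$ guarantees both $t > c_4$ and $t > \zeta/2 > |\gamma|/2$, so the square root is real and Lemma \ref{lem:CGO} applies with $\hat R = 2R$, producing CGO solutions $U_i(x,\xi^{(i)}) = e^{\ii\xi^{(i)}\cdot x}(1+v_i)$ of $(-\Delta)^n U_i - k^{2n} U_i + q U_i = 0$ in $H^{2n}(B_{\hat R})$, with $\|v_i\|_{L^2(B_{\hat R})}\lesssim t^{-n}$.

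Next, by the Ito isometry and the identity $\xi^{(1)}+\xi^{(2)} = -\gamma$,
\[
\mathbb{E}[\langle f, U_1\rangle\langle f, U_2\rangle] = \int_{B_1}\sigma(x)\,e^{-\ii\gamma\cdot x}\bigl(1 + v_1 + v_2 + v_1 v_2\bigr)\,dx = (2\pi)^3\hat\sigma(\gamma) + E,
\]
where $E$ collects the contributions from the $v_i$. Cauchy--Schwarz together with Lemma \ref{lem:CGO} gives $|E|\lesssim \|\sigma\|_{L^\infty(B_1)}(\|v_1\|_{L^2} + \|v_2\|_{L^2} + \|v_1\|_{L^2}\|v_2\|_{L^2})\lesssim \|\sigma\|_{L^\infty(B_1)}/t^n$. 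On the other hand, Lemma \ref{lem:Ewhitehomo_potential} yields $|\mathbb{E}[\langle f, U_1\rangle\langle f, U_2\rangle]| \leq c_6 M \|U_1\|_{H^{2n}(B_{\hat R})}\|U_2\|_{H^{2n}(B_{\hat R})}$. Since $|e^{\ii\xi^{(i)}\cdot x}| \leq e^{t|x|} \leq e^{2Rt}$ on $B_{\hat R}$, I would bound $\|U_i\|_{H^{2n}(B_{\hat R})}\lesssim e^{2Rt}$ and conclude that the main term is $\lesssim M e^{4Rt}$. Dividing the combined estimate by $(2\pi)^3$ yields the claimed inequality.

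The technically delicate point is the Sobolev bound $\|U_i\|_{H^{2n}(B_{\hat R})} \lesssim e^{2Rt}$. Differentiating the CGO ansatz directly would produce factors $t^{2n}$ from the phase, as well as derivatives of $v_i$ for which Lemma \ref{lem:CGO} only supplies an $L^2$ estimate. The natural remedy is to invoke interior elliptic regularity for the homogeneous equation $(-\Delta)^n U_i = (k^{2n}-q)U_i$ on a slightly enlarged ball in the CGO construction, which controls $\|U_i\|_{H^{2n}(B_{\hat R})}$ by the $L^2$ norm on that enlargement. The plain pointwise bound on the phase then yields the exponential rate, and any residual polynomial factor in $t$ is absorbed into $c_7$ at the cost of an arbitrarily small inflation of $R$, which the statement already accommodates through the dependence of $c_7$ on $R$.
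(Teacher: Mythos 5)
Your proposal is correct and follows essentially the same route as the paper: the same pair of complex vectors with $\xi^{(1)}+\xi^{(2)}=-\gamma$ and $\xi^{(i)}\cdot\xi^{(i)}=0$, the It\^{o} isometry to extract $(2\pi)^3\hat\sigma(\gamma)$ plus an error controlled by $\Vert v_i\Vert_{L^2}\lesssim t^{-n}$, Lemma \ref{lem:Ewhitehomo_potential} for the $M$-term, and interior elliptic regularity (rather than direct differentiation of the CGO ansatz) to obtain $\Vert U_i\Vert_{H^{2n}}\lesssim e^{2Rt}$ on a smaller ball. The paper handles the radius issue you flag by taking the $H^{2n}$ norm on $B_{3R/2}$ while the CGO solution and its $L^2$ bound live on $B_{2R}$, exactly as your "slightly enlarged ball" remark anticipates.
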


\begin{proof}
For a fixed multi-index $\gamma\in \mathbb{R}^3$, we choose two unit real-valued vectors $d_1$ and $d_2$ such that $d_1\cdot d_2 = d_1\cdot \gamma= d_2\cdot\gamma = 0$. Since $|\gamma|< \zeta$, we define two complex-valued vectors for $t>t_0$:
\begin{align*}
\xi_t^{(1)} &= -\frac{1}{2}\gamma + \ii t d_1 +\left(-\frac{|\gamma|^2}{4}+t^2\right)^{1/2} d_2\in \mathbb{C}^3,\\
\xi_t^{(2)} &=-\frac{1}{2}\gamma - \ii t d_1 -\left(-\frac{|\gamma|^2}{4}+t^2\right)^{1/2} d_2\in \mathbb{C}^3.
\end{align*}
It is straightforward to verify that $\xi_t^{(1)}+\xi_t^{(2)} = -\gamma$, $\xi_t^{(1)}\cdot \xi_t^{(1)}= \xi_t^{(2)}\cdot \xi_t^{(2)} = 0$. Using Lemma \ref{lem:CGO}, we construct two CGO solutions 
\begin{eqnarray*}
 U_i(x,\xi_t^{(i)}) = e^{\ii  \xi_t^{(i)}\cdot x}(1+v_i(x,\xi_t^{(i)})),\quad i=1, 2,
\end{eqnarray*}
which satisfy $(-\Delta)^{2n} U -k^{2n} U+ qU= 0$ in $B_{2R}$, with $\Vert v_i\Vert_{L^2(B_{2R})}\leq \frac{c_5 }{|\Im \xi_t^{(i)}|^n}$. For simplicity, we denote $U_1(x,\xi_t^{(1)})$ and $U_2(x,\xi_t^{(2)})$ as $U_1(x)$ and $U_2(x)$, respectively. From formulation of $U_i(x)$, we have 
\begin{equation*}
U_1(x)U_2(x)  = e^{-\ii\gamma\cdot x}(1+p(x,t)),
\end{equation*}
where
\[
p(x,t) = v_1(x,\xi_t^{(1)}) + v_2(x,\xi_t^{(2)}) + v_1(x,\xi_t^{(1)})v_2(x,\xi_t^{(2)}).
\]
By applying the Cauchy--Schwarz inequality, we obtain
\begin{align}
\Vert p(\cdot, t)\Vert_{L^1(B_1)} &\lesssim  \Vert v_1\Vert_{L^2(B_1)} +\Vert v_2\Vert_{L^2(B_1)}  +\Vert v_1\Vert_{L^2(B_1)}  \Vert v_2\Vert_{L^2(B_1)}  \nonumber \\
&\lesssim  \frac{c_5 }{|\Im \xi_t^{(1)}|^n} + \frac{c_5 }{|\Im \xi_t^{(2)}|^n} + \frac{c_5^2 }{|\Im \xi_t^{(1)}|^n |\Im \xi_t^{(2)}|^n} \lesssim \frac{c_5}{t^n}.\label{eq:estp}
\end{align}
 
Using the  It\^{o} isometry, we obtain
\begin{align*}
\E[\langle f, U_1\rangle \langle f, U_2\rangle]  &= \E\left[\int_{\mathbb{R}^3} \int_{\mathbb{R}^3} \sqrt{\sigma}(x)U_1(x) \sqrt{\sigma}(y) U_2(y)  \dot{W} (x) \dot {W}(y)dxdy\right] \nonumber\\
&=\int_{B_1}  \sigma(x)U_1(x) U_2(x)dx \nonumber\\\
&= \int_{B_1}  \sigma(x) e^{-\ii\gamma\cdot x}(1+p(x,t))dx  \nonumber\\\
&=(2\pi)^3\hat{\sigma}(\gamma) +  \int_{B_1}  \sigma(x) e^{-\ii\gamma\cdot x}p(x,t)dx,
\end{align*}
which, together with \eqref{Ewhitehomo-s2} and  \eqref{eq:estp}, yields  
\begin{align}
|\hat{\sigma}(\gamma)| &\leq (2\pi)^{-3}\left( | \E[\langle f, U_1\rangle \langle f, U_2\rangle]| + \left|\int_{B_1}  \sigma(x) e^{-\ii\gamma\cdot x}p(x,t))dx\right|\right) \nonumber \\
&\lesssim M \Vert U_1\Vert_{H^{2n}(B_{\frac{3}{2} R})} \Vert U_2\Vert_{H^{2n}(B_{\frac{3}{2} R})} +  \frac{k^{2n}}{t^n}\Vert \sigma\Vert_{L^\infty(B_1)}.\label{eq:est_low_b1}
\end{align}
Since $U_i$ satisfies $(-\Delta)^{n} U_i -k^{2n} U_i+qU_i=0$ for $i=1,2$, we derive from  the interior regularity  \cite[Theorem 6.29]{grubb2008distributions} and the H\"{o}lder inequality that 
\begin{align*}
\Vert  U_i\Vert_{H^{2n}(B_{\frac{3}{2} R})}&= \Vert e^{\ii \xi_t^{(i)}\cdot x}(1+v_i(x,\xi_t^{(i)}))\Vert_{H^{2n}(B_{\frac{3}{2} R})}\\
&\lesssim k^{2n}\Vert e^{\ii \xi_t^{(i)}\cdot x}(1+v_i(x,\xi_t^{(i)}))\Vert_{L^2(B_{2R})}\\
&\lesssim k^{2n} e^{2Rt}.
\end{align*}
Together with \eqref{eq:est_low_b1}, this completes the proof.
\end{proof}

The following result presents a stability estimate for the inverse source problem associated with the polyharmonic wave equation that includes a potential term.

\begin{theorem}\label{thm:white_inhomo}
 There exists a positive constant $C_2$, depending on $s$, $k$, and $R$, such that the following inequality holds:
\begin{equation*} 
\Vert \sigma\Vert_{L^\infty(B_1)}\leq C_2\left(\ln(3+M^{-1})\right)^{n(1-\frac{s}{3})}.
\end{equation*} 
\end{theorem}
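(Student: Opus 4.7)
I would follow the same low-/high-frequency splitting scheme as in the proof of Theorem \ref{thm:white_homo}, but with Lemma \ref{lem:Fourierlow} (the CGO-based estimate with a potential) replacing Lemma \ref{lem:Fourierlow_white_homo}. Set $N := \|\sigma\|_{L^\infty(B_1)}$. Since $\sigma \in H^s_0(B_1)$ has compact support, Fourier inversion and the standard splitting at radius $\zeta \ge 2$ give
\begin{equation*}
N \;\le\; \sup_{x\in B_1}\Bigl|\int_{\mathbb{R}^3} e^{\ii\gamma\cdot x}\hat\sigma(\gamma)\,d\gamma\Bigr| \;\le\; \int_{|\gamma|\le \zeta}|\hat\sigma(\gamma)|\,d\gamma \;+\; \int_{|\gamma|>\zeta}|\hat\sigma(\gamma)|\,d\gamma .
\end{equation*}
The tail is handled by \eqref{eqn:fourierhigh} to give the $c_1\zeta^{3-s}$ bound. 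For the low-frequency integral I plug in Lemma \ref{lem:Fourierlow} pointwise and integrate over the ball $|\gamma|\le\zeta$, producing
\begin{equation*}
N \;\le\; \tfrac{4\pi c_7}{3}\,\zeta^{3}\Bigl(M\,e^{4Rt} + \tfrac{N}{t^{n}}\Bigr) + c_1\,\zeta^{3-s},
\end{equation*}
valid for every $t>t_0=\sqrt{c_4^{2}+\zeta^{2}}$.

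The next step is to choose $t$ so that the $N$-term on the right can be absorbed into the left. Imposing $\tfrac{4\pi c_7}{3}\zeta^{3}/t^{n}\le \tfrac12$, together with the geometric constraint $t>t_0$, leads to a choice of the form $t = C_{*}\max(\zeta,\zeta^{3/n})$ with $C_{*}$ depending only on $c_7$, $c_4$. After this absorption one is left with
\begin{equation*}
N \;\le\; C\,\zeta^{3} M\,e^{4Rt} + 2c_1\,\zeta^{3-s}.
\end{equation*}

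The remaining task is to balance the exponential-in-$t$ source term against the polynomial tail $\zeta^{3-s}$. I would set $t = \tfrac{1}{8R}\ln(3+M^{-1})$, which forces $Me^{4Rt}\le M^{1/2}\to 0$ and dominates the large-$M$ situation via the ``$3$'' shift. The absorption constraint $t^n\gtrsim \zeta^{3}$ then fixes the optimal scale $\zeta \sim \bigl(\ln(3+M^{-1})\bigr)^{n/3}$. Substituting yields $\zeta^{3-s}\sim \bigl(\ln(3+M^{-1})\bigr)^{n(3-s)/3} = \bigl(\ln(3+M^{-1})\bigr)^{n(1-s/3)}$, while the first term decays polynomially in $M$ and is absorbed into the constant for $M$ small; for $M$ bounded away from zero the shift by $3$ inside the logarithm keeps the bound finite. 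The dependencies of $c_3,c_4,c_5,c_7$ on $k$ (all polynomial) then produce a constant $C_{2}$ depending only on $s$, $k$, $R$, as claimed.

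The main obstacle is reconciling the three competing constraints simultaneously: $\zeta\ge 2$, $t>\sqrt{c_4^{2}+\zeta^{2}}$, and the absorption inequality $t^{n}\gtrsim \zeta^{3}$, while still keeping both $M e^{4Rt}$ and $\zeta^{3-s}$ under control. Because $t$ must exceed $t_0$ (which grows with $\zeta$), the choice of $\zeta$ is bounded above by $t$, and one must verify that the optimal scale $(\ln(3+M^{-1}))^{n/3}$ is compatible with the Lemma \ref{lem:Fourierlow} hypothesis; this is where the precise $k$-dependence of $c_{4}$ (hence of the threshold on $M$) enters, and it is the one delicate bookkeeping step of the argument.
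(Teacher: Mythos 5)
Your proposal is correct and follows essentially the same route as the paper: the same low/high frequency splitting, Lemma \ref{lem:Fourierlow} plus \eqref{eqn:fourierhigh}, absorption of the $\|\sigma\|_{L^\infty}/t^n$ term via the coupling $\zeta^3\sim t^n$, and the choice $t=\frac{1-\tau}{4R}\ln(3+M^{-1})$ (your $t=\frac{1}{8R}\ln(3+M^{-1})$ is the case $\tau=\frac12$), with the compatibility constraint $t>t_0$ handled, as in the paper, by taking $M$ sufficiently small.
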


\begin{proof}
Note that
\begin{align*}
\Vert \sigma\Vert_{L^\infty(B_1)} &\leq \sup_{x\in B_1}\left|\int_{\mathbb{R}^3} e^{\ii \gamma\cdot x} \hat{\sigma}(\gamma)d\gamma\right|\\
&\leq \int_{|\gamma|\leq \et} |\hat{\sigma}(\gamma)|d\gamma+ \int_{|\gamma|>\et} |\hat{\sigma}(\gamma)|d\gamma:=I_1(\zeta) + I_2(\zeta).
\end{align*}
Using Lemma \ref{lem:Fourierlow}, for $t>t_0$, we obtain
\begin{equation*}
I_1(\zeta)\leq \frac{4\pi \zeta^3}{3}  c_7\left(  M e^{4Rt}+\frac{\Vert \sigma\Vert_{L^\infty(B_1)} }{t^n}\right).
\end{equation*}
Utilizing \eqref{eqn:fourierhigh} and letting $c_8=\frac{4\pi}{3}c_7$, we get
\begin{eqnarray*}
\Vert \sigma\Vert_{L^\infty(B_1)} \leq c_8\et^3 \left(  M e^{4Rt}+\frac{\Vert \sigma\Vert_{L^\infty(B_1)} }{t^n}\right)+c_1\et^{3-s}.
\end{eqnarray*}
substituting $\et = (\frac{t^n}{2c_8})^{\frac{1}{3}}$ into the above estimate leads to 
\begin{eqnarray*}
\frac{1}{2}\Vert \sigma\Vert_{L^\infty(B_1)} \leq \frac{1}{2}M t^n e^{4Rt}+c_1 (2c_8)^{\frac{s-3}{3}} t^{-\frac{n(s-3)}{3}}.
\end{eqnarray*}
Let $t = (1-\tau)\frac{\ln(3+M^{-1})}{4R}$, where $\tau\in(0,1)$ and $M$ is sufficiently small such that $t> t_0$. Then, we have
\begin{align*}
\Vert \sigma\Vert_{L^\infty(B_1)} &\leq M(3+M^{-1})^{1-\tau} \left(\frac{\ln(3+M^{-1})}{4R}\right)^n\\
&\quad +2c_1 (2c_8)^{\frac{s-3}{3}}\left((1-\tau)\frac{\ln(3+M^{-1})}{4R}\right)^{-\frac{n(s-3)}{3}}\\
&\leq  (3M+1)^{1-\tau} M^\tau \left(\frac{\ln(3+M^{-1})}{4R}\right)^n\\
&\quad +2c_1 \left(\frac{(4R)^nc_8}{(1-\tau)^n}\right)^{\frac{s-3}{3}}\left(\ln(3+M^{-1})\right)^{-\frac{n(s-3)}{3}}.
\end{align*}
Assuming that $M$ is less than a sufficiently small constant $\delta_1>0$, which depends on $s$, $k$, $R$, and $\tau$, we establish the existence of a positive constant $C_2$ such that 
\begin{align*}
\Vert \sigma\Vert_{L^\infty(B_1)} &\leq  4c_1 \left(\frac{(4R)^nc_8}{(1-\tau)^n}\right)^{\frac{s-3}{3}}\left(\ln(3+M^{-1})\right)^{-\frac{n(s-3)}{3}}\\
&\leq C_2 (\ln(3+M^{-1}))^{n(1-\frac{s}{3})},
\end{align*}
which completes the proof.
\end{proof}

\section{Conclusion}\label{sec:4}

In this paper, we have studied the stability of inverse source problems associated with the stochastic polyharmonic wave equation with white noise as the driving force. We have demonstrated that the direct problem has a unique distributional solution and established H\"{o}lder- and logarithmic-type stability for the inverse problems, in the presence and absence of potential, respectively. We plan to present further developments in future work, where we will address the challenges posed by generalized random fields and extend our approach to a broader class of wave equations.

\end{document}